\newtheorem{theorem}{Theorem}[section]
\newtheorem{lemma}[theorem]{Lemma}
\newtheorem{corollary}[theorem]{Corollary}
\newtheorem{remark}[theorem]{Remark}
\newtheorem{proposition}[theorem]{Proposition}
\newtheorem{definition}[theorem]{Definition}
\newtheorem{example}[theorem]{Example}
\newtheorem{claim}[theorem]{Claim}
\newtheorem{problem}[theorem]{Problem}
\newproof{proof}{Proof}
\numberwithin{equation}{section}
\numberwithin{theorem}{section}
\newcommand{\w}{\omega}
\newcommand{\NN}{\mathbb{N}}
\newcommand{\IR}{\mathbb{R}}
\newcommand{\DD}{\mathcal{D}}
\newcommand{\KK}{\mathcal{K}}
\newcommand{\AAA}{\mathcal{A}}
\newcommand{\cl}{\mathrm{cl}}
\newcommand{\Ra}{\Rightarrow}
\newcommand{\LRa}{\Leftrightarrow}
\newcommand{\CC}{C_k}
\newcommand{\SM}{{\setminus}}
\long\def\adds/#1\eadds/{\color{blue}#1\color{black}}
\def\cl#1{\overline{#1}}
\def\R{\mathbb{R}}
\def\bo{\mathbf{1}}
\begin{document}

\begin{frontmatter}

\title{$\kappa$-spaces}

\author{Saak Gabriyelyan}
\ead{saak@math.bgu.ac.il}
\address{Department of Mathematics, Ben-Gurion University of the Negev, Beer-Sheva, P.O. 653, Israel}

\author{Evgenii Reznichenko}
\ead{erezn@inbox.ru}
\address{Department of Mathematics, Lomonosov Mosow State University, Moscow, Russia}

\begin{abstract}
We say that a Tychonoff space $X$ is a $\kappa$-space if it is homeomorphic to a closed subspace of $C_p(Y)$ for some locally compact space $Y$. The class of $\kappa$-spaces is strictly between the class of Dieudonn\'{e} complete spaces and  the class of $\mu$-spaces. We show that  the class of $\kappa$-spaces has nice stability properties, that allows us to define the $\kappa$-completion $\kappa X$ of $X$ as the smallest $\kappa$-space in the Stone--\v{C}ech compactification $\beta X$ of $X$ containing $X$. For a point $z\in\beta X$, we show that (1) if $z\in\upsilon X$, then the Dirac measure $\delta_z$ at $z$  is bounded on each compact subset of $C_p(X)$, (2) $z\in \kappa X$ iff $\delta_z$ is continuous on each compact subset of $C_p(X)$ iff $\delta_z$ is continuous on each compact subset of $C_p^b(X)$, (3) $z\in\upsilon X$ iff $\delta_z$ is bounded on each compact subset of $C_p^b(X)$. It is proved that $\kappa X$ is the largest subspace $Y$ of $\beta X$ containing $X$ for which $C_p(Y)$ and $C_p(X)$ have the same compact subsets, this result essentially generalizes a known result of R.~Haydon.
\end{abstract}

\begin{keyword}
$\kappa$-space \sep $\kappa$-completion \sep $k_\IR$-space \sep Ascoli space

\MSC[2010] 54A05 \sep  54B05 \sep   54C35 \sep 54D50

\end{keyword}

\end{frontmatter}


\section{Introduction}


We denote by $C_p(X)$ the space $C(X)$ of all continuous real-valued functions on a Tychonoff space $X$ endowed with the pointwise topology. It is well-known that each Tychonoff space $X$ is homeomorphic to a closed subspace of the space  $C_p(Y)$ for some Tychonoff space $Y$ (one can take $Y=C_p(X)$). The classical characterization of realcompact spaces states that $X$ is realcompact if, and only if, $X$ is homeomorphic to a closed subspace of $C_p(D)$ for some discrete (the same sequential, see Proposition \ref{p:cl-embed-sequential} below) space $D$. These results motivate us to consider the following general problem.

\begin{problem} \label{prob:1}
Let $\mathcal{Y}$ be a family of Tychonoff spaces. Characterize those spaces $X$ which are homeomorphic to a closed subspace of $C_p(Y)$ for some $Y\in \mathcal{Y}$.
\end{problem}

In this article we solve Problem \ref{prob:1} for one of the most important classes of Tychonoff spaces, namely, the class $\mathcal{LCS}$ of all locally compact spaces. We shall say that a Tychonoff space $X$ is a {\em $\kappa$-space} if it is homeomorphic to a closed subspace of $C_p(Y)$ for some locally compact space $Y$. In Theorem \ref{t:k-Cp-spaces} we give several characterizations of $\kappa$-spaces, in which we show in particular that $X$ is a $\kappa$-space if, and only if, it is homeomorphic to a closed subspace of $C_p(Y)$ for the topological sum $Y$ of a family of compact spaces  if, and only if, $X$ is homeomorphic to a closed subspace of $C_p(Y)$ for some $k_\IR$-space $Y$. The condition of being a $k_\IR$-space cannot be replaced by the strictly weaker condition of being an Ascoli space, see Remark \ref{rem:Ascoli-non-kappa} (for numerous results concerning $k_\IR$-spaces and Ascoli spaces, see \cite{GR-Ascoli}). In Proposition \ref{p:kk-space-permanent} we show that the class of all $\kappa$-spaces consists of all Dieudonn\'{e} complete spaces and is contained in the class of all $\mu$-spaces. However, there are $\kappa$-spaces which are not Dieudonn\'{e} complete (Example \ref{exa:kappa-non-Dieudonne}) as well as there are $\mu$-spaces which are not $\kappa$-spaces (Example \ref{exa:mu-non-kappa}). It turns out that the class of all $\kappa$-spaces has nice stability properties (Proposition \ref{p:kk-space-permanent}) which allow us to define the {\em $\kappa$-completion} of $X$ as the smallest subspace of the Stone--\v{C}ech compactification $\beta X$ of $X$ which is a $\kappa$-space. We characterize the $\kappa$-completion $\kappa X$ of $X$ and the realcompactification $\upsilon X$ of $X$ as follows, where $\delta_z$ is the Dirac measure at a point $z$.

\begin{theorem} \label{t:x-vX-kX}
Let $X$ be a Tychonoff space, and let $z\in\upsilon X$.
\begin{enumerate}
\item[{\rm(i)}] $\delta_z$ is bounded on each compact subset of $C_p(X)$.
\item[{\rm(ii)}] $z\in \kappa X$ if, and only if, $\delta_z$ is continuous on each compact subset of $C_p(X)$.
\end{enumerate}
\end{theorem}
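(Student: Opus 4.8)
The plan is to realize the set of admissible points as a $\kappa$-space. For $z\in\upsilon X$ we regard $\delta_z$ as the linear functional $f\mapsto f^{\upsilon}(z)$ on $C(X)$, where $f^{\upsilon}\in C(\upsilon X)$ is the unique continuous extension of $f$; recall that $z$ lies in the closure of $X$ in $\beta X$, that $f^{\upsilon}=f^{\beta}|_{\upsilon X}$ when $f$ is bounded, and that a point $w\in\beta X$ lies in $\upsilon X$ exactly when $(\arctan\circ f)^{\beta}(w)\neq\pm\pi/2$ for every $f\in C(X)$. For (i): a compact $K\subseteq C_p(X)$ is pointwise bounded on $X$ (each evaluation $f\mapsto f(x)$ is continuous on $K$), and it then suffices to know that $\{f^{\upsilon}:f\in K\}$ stays pointwise bounded on $\upsilon X$ — the classical fact that $C_p(X)$ and $C_p(\upsilon X)$ have the same bounded subsets. (Directly: if $\sup_{f\in K}|f^{\upsilon}(z)|=\infty$, choose $f_n\in K$ with $|f_n^{\upsilon}(z)|>n$, pass to the bounded functions $\arctan\circ f_n$, and use the description of $z\in\upsilon X$ as a point whose $z$-ultrafilter of zero-sets of $X$ has the countable intersection property to produce $x_0\in X$ at which $\{|f_n(x_0)|\}_n$ is unbounded — a contradiction.) Hence $\sup_{f\in K}|\delta_z(f)|<\infty$, which is (i).

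For (ii), let $\KK$ be the family of all compact subsets of $C_p(X)$ and put $T:=\bigoplus_{K\in\KK}K$, a topological sum of compacta, hence locally compact, so that $C_p(T)=\prod_{K\in\KK}C_p(K)$. For $w\in\upsilon X$ set $\Phi(w):=(\delta_w|_K)_{K\in\KK}\in\prod_{K\in\KK}\RR^{K}$ and
\[
E:=\{\,w\in\upsilon X:\ \delta_w|_K\ \text{is continuous for each}\ K\in\KK\,\}=\{\,w\in\upsilon X:\ \Phi(w)\in C_p(T)\,\}.
\]
Since point evaluations are continuous on $C_p(X)$, one has $X\subseteq E\subseteq\beta X$. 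I will show $E$ is a $\kappa$-space; then, by minimality of $\kappa X$ (Proposition \ref{p:kk-space-permanent}), $\kappa X\subseteq E$, which is exactly the implication that $z\in\kappa X$ forces $\delta_z$ to be continuous on every compact subset of $C_p(X)$.

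That $E$ is a $\kappa$-space will be proved by showing $\Phi|_E\colon E\to C_p(T)$ is a closed topological embedding (which suffices, as $T$ is locally compact). Injectivity holds because $\bigcup\KK=C(X)$ and $\{f^{\upsilon}:f\in C(X)\}=C(\upsilon X)$ separates points of $\upsilon X$; continuity because each coordinate $w\mapsto f^{\upsilon}(w)$ is continuous on $\upsilon X$; and $\Phi|_E$ is an embedding because the coordinates coming from bounded $f$ recover the $\beta X$-topology (from $\Phi(w_\gamma)\to\Phi(w)$ we get $f^{\beta}(w_\gamma)\to f^{\beta}(w)$ for all bounded $f$, so $w_\gamma\to w$ in $\beta X$, hence in $E$). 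For closedness, suppose $\psi\in C_p(T)$ lies in the closure of $\Phi(E)$ in $\RR^{T}$ and pick a net $\Phi(w_\gamma)\to\psi$ there; by compactness of $\beta X$ we may assume $w_\gamma\to w\in\beta X$. For each $f\in C(X)$ the $(\{f\},f)$-coordinate of $\Phi(w_\gamma)$ equals $f^{\upsilon}(w_\gamma)$, whose limit is $\psi_{\{f\}}(f)$; comparing this with the limits of $f^{\beta}(w_\gamma)$ (for bounded $f$) and of $(\arctan\circ f)^{\beta}(w_\gamma)$ (in general) forces $w\in\upsilon X$ and $\delta_w(f)=\psi_{\{f\}}(f)$ for all $f$. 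Since $\psi\in C_p(T)$, each $\psi_K$ is continuous, so $\delta_w|_K=\psi_K\in C(K)$ for all $K$; hence $w\in E$ and $\Phi(w)=\psi$. Thus $\Phi(E)$ is closed in $C_p(T)$ and $E$ is a $\kappa$-space.

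For the reverse implication, since $\kappa X$ is one of them it suffices to show $z\in Y$ for every $\kappa$-space $Y$ with $X\subseteq Y\subseteq\beta X$. Fix such a $Y$; by Theorem \ref{t:k-Cp-spaces} there is a closed topological embedding $h=(h_i)_{i\in I}\colon Y\hookrightarrow C_p\bigl(\bigoplus_{i\in I}K_i\bigr)=\prod_{i\in I}C_p(K_i)$ with every $K_i$ compact. Put $g_{i,t}:=(\mathrm{ev}_t\circ h_i)|_X\in C(X)$ for $i\in I$, $t\in K_i$. For fixed $i$ the map $t\mapsto g_{i,t}$ is continuous from $K_i$ into $C_p(X)$ (coordinatewise $x\mapsto h_i(x)(t)$, continuous in $t$ because $h_i(x)\in C(K_i)$), so its image belongs to $\KK$; as $z\in E$, the function $\phi_i(t):=\delta_z(g_{i,t})=g_{i,t}^{\upsilon}(z)$ is continuous on $K_i$, and $\phi:=(\phi_i)_i\in\prod_i C_p(K_i)=C_p\bigl(\bigoplus_i K_i\bigr)$. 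Choosing a net $x_\gamma\to z$ in $\beta X$ (hence in $\upsilon X$) gives $h_i(x_\gamma)(t)=g_{i,t}(x_\gamma)\to g_{i,t}^{\upsilon}(z)=\phi_i(t)$ for all $i,t$, so $h(x_\gamma)\to\phi$ in $C_p\bigl(\bigoplus_i K_i\bigr)$ and therefore $\phi\in\overline{h(X)}\subseteq\overline{h(Y)}=h(Y)$. Writing $\phi=h(y)$ and using that $h$ is an embedding, $x_\gamma\to y$ in $Y$, hence in $\beta X$, and Hausdorffness forces $z=y\in Y$. The main obstacle is precisely this reverse implication: an arbitrary $\kappa$-space $Y$ is accessible only through its realization (Theorem \ref{t:k-Cp-spaces}) as a closed subspace of $C_p$ of a topological sum of compacta, so continuity of $\delta_z$ on the compact subsets of $C_p(X)$ must be transported into a membership statement inside $Y$ along that realization together with the above net/closure argument; the companion technical point is verifying that $\Phi(E)$ is genuinely closed in $C_p(T)$ — not merely that $E$ embeds as some $\kappa$-subspace — in the forward part.
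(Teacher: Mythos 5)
Your proof is correct, but both halves depart from the paper's route in ways worth recording. For (i), the paper extends the canonical embedding $X\to C_p(C_p(X))$ to $\upsilon X$ and invokes Okunev's theorem ($\upsilon C_p(Z)$ consists of the strictly $\w$-continuous functions, which are bounded on compacta); your direct argument --- extracting $f_n\in K$ with $|f_n^{\upsilon}(z)|>n$, noting that the zero-sets $\{x:|f_n(x)|\ge n\}$ all belong to the $z$-ultrafilter of $z\in\upsilon X$, and using its countable intersection property to find $x_0$ where $K$ fails to be pointwise bounded --- is more elementary and in fact proves the stronger statement that $\delta_z$ is bounded on every \emph{pointwise bounded} subset of $C_p(X)$, not just on compacta. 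For (ii), the paper routes the forward implication through Proposition \ref{p:f-extension-kk} (perfect preimages of $\kappa$-spaces are $\kappa$-spaces, hence maps into $C_p(K)$ extend over $\kappa X$) and Proposition \ref{p:description-point-kk}; you instead exhibit the set $E$ of points of $\upsilon X$ at which all restrictions $\delta_w{\restriction}_K$ are continuous as a \emph{closed} subspace of $C_p\bigl(\bigoplus_{K}K\bigr)$ via the map $\Phi$ (injectivity and the embedding property come from the singleton coordinates, closedness from the $\arctan$ trick that simultaneously forces the limit point into $\upsilon X$ and identifies its coordinates), so that $E$ is itself a $\kappa$-space and minimality gives $\kappa X\subseteq E$. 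Your reverse implication is essentially the paper's (ii)$\Ra$(i) of Proposition \ref{p:description-point-kk}, rephrased with nets and applied to an arbitrary $\kappa$-space $Y$ between $X$ and $\beta X$ rather than to $\kappa X$ itself. The paper's machinery is reusable elsewhere (Proposition \ref{p:f-extension-kk} also yields the functoriality statement and Corollary \ref{c:f-extension-kk}); your construction is more self-contained and has the pleasant byproduct that $E=\kappa X$, i.e.\ an explicit closed realization of $\kappa X$ inside $C_p$ of one concrete locally compact space, namely the sum of all compact subsets of $C_p(X)$.
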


For the space $C_p^b(X)\subseteq C_p(X)$ of all {\em bounded} functions on a Tychonoff space $X$, we have the following analogue of Theorem \ref{t:x-vX-kX} (in which we consider the points of $\beta X$, but not only of $\upsilon X$).
\begin{theorem} \label{t:x-vX-kX-bounded}
Let $X$ be a Tychonoff space, and let $z\in\beta X$.
\begin{enumerate}
\item[{\rm(i)}] $z\in\upsilon X$ if, and only if, $\delta_z$ is bounded on each compact subset of $C_p^b(X)$.
\item[{\rm(ii)}] $z\in \kappa X$ if, and only if, $\delta_z$ is continuous on each compact subset of $C_p^b(X)$.
\end{enumerate}
\end{theorem}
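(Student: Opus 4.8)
The plan is to deduce Theorem~\ref{t:x-vX-kX-bounded} from Theorem~\ref{t:x-vX-kX} by comparing the compact subsets of $C_p(X)$ with those of $C_p^b(X)$ on the relevant points of $\beta X$. The key observation is that for $z\in\upsilon X$ the two statements are essentially equivalent via a truncation argument, while the genuinely new content of Theorem~\ref{t:x-vX-kX-bounded} is part~(i), which must handle arbitrary points of $\beta X\setminus\upsilon X$.

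\emph{Part (i).} One direction is almost immediate: if $z\in\upsilon X$, then every $f\in C(X)$ extends to $\upsilon X$, hence $\delta_z$ is well-defined on $C(X)$, and by Theorem~\ref{t:x-vX-kX}(i) it is bounded on each compact subset of $C_p(X)$, in particular on each compact subset of $C_p^b(X)\subseteq C_p(X)$. For the converse, suppose $z\in\beta X\setminus\upsilon X$. Then there is $f\in C(X)$ which does not extend continuously to $z$, equivalently $f$ is unbounded along the filter of neighbourhoods of $z$ in $\beta X$; by composing with a homeomorphism $\IR\to(-1,1)$ we may assume $f$ itself is bounded, $f\in C^b(X)$, but its continuous extension $\beta f\colon\beta X\to[-1,1]$ takes a value in $\{-1,1\}$ at $z$, say $\beta f(z)=1$, while $f(X)\subseteq(-1,1)$. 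Now I build a compact subset of $C_p^b(X)$ on which $\delta_z$ is unbounded. The idea is to take the sequence $g_n := n\cdot(1-f)^{?}$ — more carefully: choose, using that $z\notin\upsilon X$, a function realizing the failure of realcompactness and form a countable family whose closure in $C_p(X)$ is compact (a convergent sequence together with its limit) but such that evaluation at $z$ of the extensions tends to infinity. Concretely, pick $f_n\in C^b(X)$ with $f_n\to 0$ pointwise on $X$, the set $\{f_n\}\cup\{0\}$ compact in $C_p(X)$, yet $\beta f_n(z)=n$; such $f_n$ exist precisely because $z\notin\upsilon X$ — this is the standard characterization of $\upsilon X$ as the set of points of $\beta X$ at which every function that is bounded on a zero-set neighbourhood remains controlled. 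Then $\{f_n\}\cup\{0\}$ is a compact subset of $C_p^b(X)$ on which $\delta_z$ is unbounded, contradiction. Thus $z\in\upsilon X$.

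\emph{Part (ii).} Since $C_p^b(X)\subseteq C_p(X)$, any compact subset of $C_p^b(X)$ is a compact subset of $C_p(X)$, so the ``only if'' direction (continuity of $\delta_z$ on compacta of $C_p(X)$ implies continuity on compacta of $C_p^b(X)$) together with Theorem~\ref{t:x-vX-kX}(ii) gives: $z\in\kappa X\Rightarrow\delta_z$ continuous on compacta of $C_p^b(X)$. For the converse, assume $\delta_z$ is continuous on every compact subset of $C_p^b(X)$; I must show $z\in\kappa X$. By Theorem~\ref{t:x-vX-kX}(ii) it suffices to show $z\in\upsilon X$ and that $\delta_z$ is continuous on every compact $K\subseteq C_p(X)$. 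First, $z\in\upsilon X$: if not, by part~(i) there is a compact $K_0\subseteq C_p^b(X)$ on which $\delta_z$ is unbounded, hence not continuous there, contradiction — so $z\in\upsilon X$ and in particular $\delta_z$ is a well-defined finite linear functional on $C(X)$. Now let $K\subseteq C_p(X)$ be compact. Compose pointwise with a fixed homeomorphism $h\colon\IR\to(-1,1)$: the map $C_p(X)\to C_p^b(X)$, $g\mapsto h\circ g$, is continuous, so $h\circ K$ is a compact subset of $C_p^b(X)$, on which $\delta_z$ is continuous by hypothesis; equivalently, the evaluation $g\mapsto h(g(z)){=}h\circ g$ evaluated-at-$z$ — wait, the subtlety is that $\delta_z(h\circ g)=\beta(h\circ g)(z)$ need not equal $h(\beta g(z))$ unless $\beta g(z)$ is finite, which it is since $z\in\upsilon X$. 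So on $\upsilon X$, $\beta(h\circ g)=h\circ(\beta g)$, hence $\delta_z(h\circ g)=h(\delta_z(g))$, and continuity of $g\mapsto h(\delta_z(g))$ on $K$ plus injectivity and continuity of $h^{-1}$ on bounded sets yields continuity of $g\mapsto\delta_z(g)$ on $K$. Therefore $\delta_z$ is continuous on all compacta of $C_p(X)$, and Theorem~\ref{t:x-vX-kX}(ii) gives $z\in\kappa X$.

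\emph{Main obstacle.} The delicate point is part~(i)'s converse: producing, for each $z\in\beta X\setminus\upsilon X$, an explicit compact subset of $C_p^b(X)$ witnessing unboundedness of $\delta_z$. This requires the right characterization of $\upsilon X$ — namely that $z\notin\upsilon X$ supplies a countable family in $C^b(X)$ converging pointwise to $0$ whose $\beta X$-extensions blow up at $z$ — together with the Grothendieck-type fact that a pointwise-convergent sequence forms a compact set in $C_p(X)$. Everything else reduces, via the truncation map $g\mapsto h\circ g$ and the fact that $\beta(h\circ g)=h\circ\beta g$ on $\upsilon X$, to Theorem~\ref{t:x-vX-kX}.
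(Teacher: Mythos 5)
Your overall architecture coincides with the paper's: the two easy implications follow from Theorem~\ref{t:x-vX-kX}, part~(ii) reduces to part~(i) plus a truncation argument, and the entire weight of the theorem rests on showing that $z\in\beta X\setminus\upsilon X$ forces $\delta_z$ to be unbounded on some compact subset of $C_p^b(X)$. Your part~(ii) is correct and complete: composing with a homeomorphism $h\colon\IR\to(-1,1)$ and using uniqueness of continuous extensions off a dense subspace to get $\overline{h\circ g}=h\circ\bar g$ on $\upsilon X$ is a clean variant of the paper's truncations $Q_n(f)=f\wedge n\vee(-n)$ in the step (v)$\Rightarrow$(iii) of Proposition~\ref{p:description-point-kk}; your version even bypasses the preliminary boundedness step that the paper draws from Proposition~\ref{p:vX-bounded-compact}.

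The genuine gap is in the converse of part~(i), and you have located it yourself. You assert that $z\notin\upsilon X$ supplies functions $f_n\in C^b(X)$ with $f_n\to 0$ pointwise on $X$ while $\beta f_n(z)=n$, justified by appeal to ``the standard characterization of $\upsilon X$ as the set of points at which every function that is bounded on a zero-set neighbourhood remains controlled.'' That is not a quotable characterization, and the claim does not follow from the existence of a single $f\in C(X)$ failing to extend to $z$: that only hands you one bounded $u=h\circ f$ with $u(X)\subseteq(-1,1)$ and $\beta u(z)=1$, from which the desired null sequence must still be manufactured. This is precisely the content of the paper's Proposition~\ref{p:delta-z-bounded-compact}. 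There one takes a continuous $h\colon\beta X\to[0,1]$ with $h(z)=0$ and $h>0$ on $\upsilon X$ (Engelking, Theorem~3.11.10; your $1-\beta u$ would serve), chooses $a_n\downarrow 0$ with $A_n:=\{x\in\beta X: h(x)\in[a_{n+1},a_n]\}\neq\emptyset$, and builds continuous $g_n\colon\beta X\to[a_n,1/a_n]$ equal to $a_n$ on $A_{n+1}\cup\{z\}$ and to $1/a_n$ on $\{h\geq a_n\}$; then $f_n:=\tfrac{1}{g_n}{\restriction}_X$ is bounded, $f_n(x)=a_n\to 0$ for each fixed $x$ (since $h(x)>0$ puts $x$ in $\{h\geq a_n\}$ eventually), so $\{f_n\}\cup\{\mathbf{0}\}$ is compact in $C_p^b(X)$, while $\delta_z(f_n)=1/a_n\to\infty$. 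Without this (or an equivalent) construction, your part~(i) converse, and hence also the step ``$z\in\upsilon X$'' in your part~(ii) converse, is unproved.
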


We denote by $\mu X$ the {\em $\mu$-completion} of a Tychonoff space $X$ (i.e., $\mu X$ is the smallest subspace of $\beta X$ containing $X$  which is a $\mu$-space). A famous result of Haydon \cite[Proposition~2.9]{Haydon-2} states that the spaces $C_p(X)$ and $C_p(\mu X)$ have the same compact sets.
In the following theorem we essentially strengthen Haydon's assertion.

\begin{theorem} \label{t:compact-Cp(KX)}
Let $X$ be a subspace of a Tychonoff space $Y$ such that  $X\subseteq Y\subseteq \beta X$. Then the following assertions are equivalent:
\begin{enumerate}
\item[{\rm(i)}] $C_p(Y)$ and $C_p(X)$ have the same compact subsets;
\item[{\rm(ii)}] $C_p^b(Y)$ and $C_p^b(X)$ have the same compact subsets;
\item[{\rm(iii)}] $Y\subseteq \kappa X$.
\end{enumerate}
\end{theorem}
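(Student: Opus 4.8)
The plan is to prove the chain of equivalences $(iii)\Rightarrow(i)\Rightarrow(ii)\Rightarrow(iii)$, leaning on Theorems \ref{t:x-vX-kX} and \ref{t:x-vX-kX-bounded} together with the stability properties of the class of $\kappa$-spaces from Proposition \ref{p:kk-space-permanent}. Throughout, fix $X\subseteq Y\subseteq\beta X$. The restriction map $R\colon C_p(Y)\to C_p(X)$, $f\mapsto f\uhr X$, is continuous and injective (since $X$ is dense in $\beta X$, hence in $Y$); the crux of each implication is to understand when $R$ and its set-theoretic inverse carry compact sets to compact sets.

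For $(iii)\Rightarrow(i)$: assume $Y\subseteq\kappa X$. Since $R$ is continuous, the image of any compact $K\subseteq C_p(Y)$ is compact in $C_p(X)$; the content is the reverse inclusion. Let $K\subseteq C_p(X)$ be compact. First I would check that every $f\in K$ extends (uniquely) to a continuous function on $Y$: for $z\in Y\subseteq\kappa X$, Theorem \ref{t:x-vX-kX}(ii) says $\delta_z$ is continuous on $K$, so $z\mapsto\langle\delta_z,f\rangle$ defines a bounded real-valued function on $K$ for each fixed $f$; more to the point, fixing $f$ and letting $z$ range, one uses that $K$ is compact and the evaluation $e\colon Y\times K\to\RR$ is separately continuous and — via the continuity of all $\delta_z$ on $K$ and the fact that $Y\subseteq\kappa X\subseteq\upsilon X$ gives boundedness (Theorem \ref{t:x-vX-kX}(i)) — that the pointwise extension $\tilde f$ of $f$ to $Y$ is continuous. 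The standard tool here is that a real function on $Y\subseteq\beta X$ that is a pointwise limit of a net from $C(Y)$ bounded on compacta, and whose associated Dirac functionals are continuous on the relevant compact set, is itself continuous; alternatively one embeds $K$ into a compact space and uses Grothendieck-type interchange of limits. Having produced $\tilde K=\{\tilde f:f\in K\}\subseteq C(Y)$ with $R(\tilde K)=K$, I would show $\tilde K$ is compact in $C_p(Y)$: it is pointwise bounded on $Y$ (from (i) of Theorem \ref{t:x-vX-kX}), and $R\uhr\tilde K$ is a continuous bijection onto the compact $K$ whose inverse is continuous because the topology of $C_p(\tilde K)$ and that of $C_p(K)$ agree on points of $X$ which are dense — so $\tilde K$ is homeomorphic to $K$, hence compact. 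This is the step I expect to be the main obstacle: making rigorous that the pointwise extensions are genuinely continuous on all of $Y$, i.e. converting ``$\delta_z$ continuous on each compact subset'' into ``every member of each compact subset extends continuously''.

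The implication $(i)\Rightarrow(ii)$ is the easy direction. Assume $C_p(Y)$ and $C_p(X)$ have the same compact sets. Since $C_p^b(X)$ is a (dense) subspace of $C_p(X)$ and likewise for $Y$, and since the restriction map sends $C^b(Y)$ into $C^b(X)$ and — by $(i)$ — its inverse sends $C(X)$ bijectively onto $C(Y)$, one checks that a bounded function on $X$ extends to a bounded function on $Y$ (boundedness is preserved since the sup over the dense set $X$ equals the sup over $Y$ for a continuous extension). Hence $R$ restricts to a homeomorphism $C_p^b(Y)\to C_p^b(X)$ (its inverse being the corestriction of the inverse from $(i)$), and homeomorphisms preserve compact sets; so $(ii)$ follows. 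One should just be careful that the subspace topologies match, which is immediate since $C_p^b$ carries the subspace topology from $C_p$.

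Finally $(ii)\Rightarrow(iii)$: assume $C_p^b(Y)$ and $C_p^b(X)$ have the same compact sets, and fix $z\in Y$; I must show $z\in\kappa X$, for which by Theorem \ref{t:x-vX-kX-bounded}(ii) it suffices to show $\delta_z$ is continuous on each compact $K\subseteq C_p^b(X)$. Given such $K$, note $z\in\beta X$ and the evaluation $\delta_z$ on $C_p^b(Y)$ is visibly continuous (evaluation at a point is always continuous in $C_p$); by hypothesis $K$ (viewed inside $C_p^b(Y)$ after identifying each $f\in K$ with its bounded continuous extension to $Y$ — which exists because $K$ is compact in $C_p^b(X)$ and compact sets coincide, so the inverse restriction map provides the extension) is compact in $C_p^b(Y)$, and $\delta_z\uhr K$ is the composite of this identification with evaluation at $z\in Y$, hence continuous. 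Restricting back to $X$, $\delta_z$ is continuous on $K\subseteq C_p^b(X)$. Therefore $z\in\kappa X$ for every $z\in Y$, i.e. $Y\subseteq\kappa X$. The one subtlety, again a shadow of the main obstacle above, is the identification of each $f\in K$ with an extension living in $C^b(Y)$; but here it is handed to us directly by the hypothesis $(ii)$ rather than needing to be built by hand.
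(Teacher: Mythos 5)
Your overall cycle $(iii)\Ra(i)\Ra(ii)\Ra(iii)$ is the same as the paper's, and your $(ii)\Ra(iii)$ step is correct and essentially identical to the paper's (evaluation at $z\in Y$ is continuous on $C_p^b(Y)$, hence on each compact subset of $C_p^b(Y)$, which by hypothesis are exactly the compact subsets of $C_p^b(X)$; then invoke Theorem \ref{t:x-vX-kX-bounded}(ii)). But the other two implications contain genuine errors. In $(iii)\Ra(i)$ you treat the wrong step as the obstacle and justify the right step with a failing argument. The existence of a continuous extension of each individual $f\in K$ to $Y$ is automatic and needs no Grothendieck-type limit interchange: $Y\subseteq\kappa X\subseteq \upsilon X$, so $\bar f{\uhr}_Y$ works. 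The actual content is the compactness of $\tilde K=\{\bar f{\uhr}_Y: f\in K\}$ in $C_p(Y)$, and your justification --- that the inverse of $R{\uhr}_{\tilde K}$ is continuous ``because the topology of $C_p(\tilde K)$ and that of $C_p(K)$ agree on points of $X$ which are dense'' --- is a non-argument: pointwise convergence of a net of continuous functions on a dense subset does not imply pointwise convergence on the whole space. If it did, compact subsets of $C_p(X)$ and $C_p(\beta X)$ would always coincide and $\kappa X$ would always equal $\beta X$, contradicting Example \ref{exa:mu-non-kappa}. The correct (and short) argument is the one the paper gives: the topology of $C_p(Y)$ is the initial topology with respect to $\{\delta_y: y\in Y\}$, and each $\delta_y$ with $y\in Y\subseteq\kappa X$ is continuous on $K$ by Theorem \ref{t:x-vX-kX}(ii) (equivalently Proposition \ref{p:description-point-kk}); hence $f\mapsto\bar f{\uhr}_Y$ is continuous from $K$ into $C_p(Y)$ and $\tilde K$ is compact as a continuous image of a compact set. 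You do have the continuity of the $\delta_y$'s in hand --- you just spend it on the (trivial) extension of individual functions instead of on the continuity of the extension map.

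In $(i)\Ra(ii)$ the claim that $R$ restricts to a homeomorphism of $C_p^b(Y)$ onto $C_p^b(X)$ is false whenever $Y\ne X$: if $R^{-1}$ were continuous on $C_p^b(X)$, then for $y\in Y\setminus X$ the functional $f\mapsto \bar f(y)$ would be a continuous linear functional on the dense subspace $C_p^b(X)$ of $C_p(X)$, hence a finite linear combination of evaluations at points of $X$, which is impossible. Fortunately the implication does not need this: a singleton is compact, so (i) already gives that $R:C(Y)\to C(X)$ is a bijection carrying $C^b(Y)$ onto $C^b(X)$ (a continuous extension of a bounded function over a dense subspace has the same supremum), and since $C_p^b$ carries the subspace topology of $C_p$, a subset of $C^b(X)$ is compact in $C_p^b(X)$ iff it is compact in $C_p(X)$ iff (by (i)) its extension is compact in $C_p(Y)$ iff that extension is compact in $C_p^b(Y)$. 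So both faulty implications are repairable with material you already cite, but as written neither is a proof.
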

Consequently, $\kappa X$ can be characterized as the largest subspace $Y$ of $\beta X$ consisting of $X$ for which $C_p(Y)$ and $C_p(X)$ have the same compact subsets.

The article is organized as follows. In Section \ref{sec:preliminary} we recall basic notions used in the article and  prove some auxiliary results, see for example Proposition \ref{p:com-met-Cp} which is of independent interest. Permanent properties of $\kappa$-spaces are established in Section \ref{sec:k-Cp-spaces}. In the last Section \ref{sec:main} we prove Theorems \ref{t:x-vX-kX}, \ref{t:x-vX-kX-bounded} and \ref{t:compact-Cp(KX)} and give the aforementioned distinguished Examples \ref{exa:kappa-non-Dieudonne} and \ref{exa:mu-non-kappa}.


\section{Preliminary results} \label{sec:preliminary}



Let $X$ be a Tychonoff space. We denote by $\KK(X)$ the family of all compact subsets of $X$. Recall that a subset $A$ of $X$ is called {\em functionally bounded} if $f(A)$ is a bounded subset of $\IR$ for every $f\in C(X)$. A Tychonoff space $X$ is called a {\em $\mu$-space} if every functionally bounded subset of $X$ has compact closure.   The {\em Dieudonn\'e completion of $X$} is denoted by $\mathcal{D} X$. We shall say that $X$ is {\em closely embedded into } a Tychonoff space $Y$ if there is an embedding $f:X\to Y$ whose image $f(X)$ is a closed subspace of $Y$.

Recall that a Tychonoff space $X$ is called
\begin{enumerate}
\item[$\bullet$] {\em Fr\'{e}chet--Urysohn} (FU) if for any cluster point $a\in X$ of a subset $A\subseteq X$ there is a sequence $\{ a_n\}_{n\in\w}\subseteq A$ which converges to $a$;
\item[$\bullet$] {\em sequential} if for each non-closed subset $A\subseteq X$ there is a sequence $\{a_n\}_{n\in\NN}\subseteq A$ converging to some point $a\in \bar A\setminus A$;
\item[$\bullet$] a {\em $k$-space} if for each non-closed subset $A\subseteq X$ there is a compact subset $K\subseteq X$ such that $A\cap K$ is not closed in $K$ ;
\item[$\bullet$] a {\em $k_\IR$-space} if every $k$-continuous function $f:X\to\IR$ is continuous ($f$ is {\em $k$-continuous} if the restriction of $f$ to any $K\in\KK(X)$ is continuous);
\item[$\bullet$] {\em $\kappa$-Fr\'{e}chet--Urysohn} ($\kappa$-FU) if for every open subset $U$ of $X$ and every $x\in \overline{U}$, there exists a sequence $\{x_n\}_{n\in\w} \subseteq U$ converging to $x$;
\item[$\bullet$] an {\em Ascoli space} if every compact subset $\KK$ of $\CC(X)$ is evenly continuous (i.e., if the map $(f,x)\mapsto f(x)$ is continuous as a map from $\KK\times X$ to $\IR$).
\end{enumerate}
The notion of  $\kappa$-Fr\'{e}chet--Urysohn spaces was introduced by Arhangel'skii (and studied for example in \cite{Gabr-B1}). Ascoli spaces were defined in \cite{BG}.  In \cite{Noble} Noble proved that any $k_\IR$-space is Ascoli, however, the converse is not true in  general, see  \cite{BG}.
Recall that $X$ has {\em countable tightness} ($t(X)=\aleph_0$) if for any cluster point $a$ of a subset $A$ of $X$, there is a countable set $A_0 \subseteq A$ such that $a\in \overline{A_0}$.
The relationships between the introduced notions are described in the next diagram in which none of the implications is reversible:
\[
\xymatrix{
& \mbox{$\kappa$-FU}  \ar@{=>}[rrr] &&& \mbox{Ascoli}\\
\mbox{metrizable} \ar@{=>}[r]  &\mbox{FU} \ar@{=>}[r]\ar@{=>}[u] & \mbox{sequential} \ar@{=>}[r]\ar@{=>}[d] & \mbox{$k$-space} \ar@{=>}[r] & \mbox{$k_\IR$-space} \ar@{=>}[u]\\
&& \mbox{countably tight} & &}
\]
Note that, by the main result of \cite{GGKZ-2}, the space $C_p(\w_1)$ is $\kappa$-Fr\'{e}chet--Urysohn  which is not a $k_\IR$-space.

Let $\mathcal{M}$ be a family of subspaces of a Tychonoff space $X$. Recall that  $X$ is called {\em strongly functionally generated} by the family $\mathcal{M}$ if for each real-valued discontinuous function $f$ on $X$, there is an $M \in\mathcal{M}$ such that the function $f{\restriction}_M : M \to \IR$ is discontinuous. Recall also that $X$ is {\em functionally generated} by the family $\mathcal{M}$ if for every discontinuous function $f: X \to \IR$, there is an $M \in\mathcal{M}$ such that the function  $f{\restriction}_M$ cannot be extended to a real-valued continuous function on the whole space $X$.

For Tychonoff spaces $X$ and $Y$ and a cardinal $\lambda$, a mapping $f:X\to Y$ is called {\em strongly $\lambda$-continuous} if for every $A\subseteq X$ of cardinality $|A|\leq \lambda$, there is a continuous function $g:X\to Y$ such that $f{\restriction}_A=g{\restriction}_A$. The {\em $R$-tightness $t_R(X)$} of $X$ is the smallest infinite cardinal $\lambda$ such that each strongly $\lambda$-continuous real-valued function on $X$ is continuous.
We shall use the following theorem of Okunev \cite{Okunev-85} (see also Exercise 438 of \cite{Tkachuk-1}).
\begin{theorem}[\cite{Okunev-85}] \label{t:Okunev}
For every Tychonoff space $X$, we have
\[
\upsilon C_p(X)=\big\{ f\in\IR^X: f \mbox{ is strictly $\w$-continuous}\big\}.
\]
\end{theorem}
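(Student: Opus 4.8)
The statement to be proven is Okunev's description of $\upsilon C_p(X)$. Write $E(X):=\{f\in\IR^X: f\text{ is strictly }\w\text{-continuous}\}$, so that $f\in E(X)$ precisely when $f{\restriction}_A\in C(X){\restriction}_A$ for every countable $A\subseteq X$. The plan is to verify that $E(X)$ is a realcompact space in which $C_p(X)$ embeds as a dense, $C$-embedded subspace, and then to invoke the classical description of the Hewitt realcompactification: a realcompact space containing $Z$ as a dense $C$-embedded subspace is canonically homeomorphic over $Z$ to $\upsilon Z$. (For the latter principle: $C$-embeddedness forces $\beta Z=\beta Y$, so $Z\subseteq Y\subseteq\beta Z$ with $Y$ realcompact gives $\upsilon Z\subseteq Y$; and if some $y\in Y$ lay in a zero-set $u^{-1}(0)$ of $\beta Z$ disjoint from $Z$, with $u\in C^{*}(\beta Z)$ and $u>0$ on $Z$, then the extension over $Y$ of $1/(u{\restriction}_Z)\in C(Z)$ would multiply with $u$ to the constant $1$ on the dense set $Z$, hence also at $y$, contradicting $u(y)=0$; so $Y=\upsilon Z$.) Applied with $Z=C_p(X)$ and $Y=E(X)$ this yields $\upsilon C_p(X)=E(X)$, which is the assertion.

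First I would check that $E(X)$ is realcompact. For countable $A\subseteq X$ let $\pi_A\colon\IR^X\to\IR^A$ be the restriction map and put $R_A:=\pi_A(C_p(X))=C(X){\restriction}_A$; then $E(X)=\bigcap\{\pi_A^{-1}(R_A): A\in[X]^{\le\w}\}$. Since $A$ is countable, $\IR^A$ is second countable, so $R_A$ is separable metrizable, hence Lindel\"of, hence realcompact; under $\IR^X\cong\IR^A\times\IR^{X\SM A}$ the set $\pi_A^{-1}(R_A)$ becomes $R_A\times\IR^{X\SM A}$, a product of realcompact spaces, hence realcompact. An arbitrary intersection of realcompact subspaces of a Hausdorff space is realcompact, since the diagonal realizes it as a closed subspace of the (realcompact) product of the pieces; so $E(X)$ is realcompact. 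Plainly $C_p(X)\subseteq E(X)\subseteq\IR^X$ and $C_p(X)$ is dense in $\IR^X$, hence in $E(X)$. (As an aside, the inclusion $\upsilon C_p(X)\subseteq E(X)$ also follows directly: granting the standard fact $\upsilon C_p(X)\subseteq\IR^X$, the map $\pi_A{\restriction}_{C_p(X)}\colon C_p(X)\to R_A$ has realcompact range, so it extends over $\upsilon C_p(X)$ and, by uniqueness of continuous extensions into the Hausdorff space $\IR^A$, this extension equals $\pi_A{\restriction}_{\upsilon C_p(X)}$; hence $f{\restriction}_A\in R_A$ for every $f\in\upsilon C_p(X)$ and every countable $A$.)

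The crux is that $C_p(X)$ is $C$-embedded in $E(X)$: every $h\in C(C_p(X))$ must extend continuously over $E(X)$. Since $C_p(X)$ is dense in $E(X)$ and $\IR$ is regular, it suffices to show that at each $f\in E(X)$ the function $h$ has a finite limit along the trace on $C_p(X)$ of the neighbourhood filter of $f$ in $\IR^X$; the map $f\mapsto\lim_{g\to f}h(g)$ is then the required extension. If this limit failed at some $f\in E(X)$, then, choosing a value $c$ strictly between two distinct cluster values of $h$ near $f$ (or exploiting unboundedness of $h$ near $f$), the disjoint open sets $U=\{h<c\}$ and $V=\{h>c\}$ of $C_p(X)$ would both have $f$ in their closure in $\IR^X$. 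One then builds, by recursion on $n$, an increasing chain of finite sets $B_n\subseteq X$ with $A=\bigcup_nB_n$ countable, and functions $u_n\in U$, $v_n\in V$ approximating $f$ on $B_n$ ever more closely, with $B_{n+1}$ absorbing finite witnesses (coming from openness of $U$, $V$) that suitable basic neighbourhoods of $u_n$, $v_n$ lie inside $U$, resp.\ $V$. Using that $f$ is strictly $\w$-continuous, pick $g^{\ast}\in C(X)$ with $g^{\ast}{\restriction}_A=f{\restriction}_A$; the construction is arranged so that every neighbourhood of $g^{\ast}$ in $C_p(X)$ meets both $U$ and $V$, contradicting the fact that $g^{\ast}$ (with $h(g^{\ast})\ne c$) is a point of continuity of $h$. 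This contradiction shows $h$ extends over $E(X)$, so $C_p(X)$ is $C$-embedded there, and the proof is finished by the first paragraph.

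I expect the main obstacle to be exactly this recursion. Its delicate feature is a circularity: the finite set witnessing that a basic neighbourhood of $u_n$ stays inside $U$ is revealed only \emph{after} $u_n$ has been chosen, yet one needs $u_n$ (or a replacement chosen inside that neighbourhood) to agree closely with $f$ on that very set, so that the eventual interpolant $g^{\ast}$ falls into it. Handling this correctly --- choose $u_n$, read off the witness, re-choose inside the neighbourhood a function additionally close to $f$ on the witness set, and iterate/diagonalize so the process stabilizes --- is where the real content of Okunev's theorem lies, and it is precisely here that strict $\w$-continuity of $f$ (supplying, for every countable configuration of coordinates, an honest continuous function realizing $f$ there) enters essentially.
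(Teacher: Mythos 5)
The paper does not prove this statement: Theorem \ref{t:Okunev} is imported from Okunev's paper (see also Exercise 438 of \cite{Tkachuk-1}) and used as a black box, so there is no in-paper argument to compare against; I therefore assess your proposal on its own. Its skeleton is sound: $E(X)=\bigcap_A\pi_A^{-1}(R_A)$ is indeed realcompact (each $R_A$ is second countable, hence Lindel\"of, hence realcompact; $\pi_A^{-1}(R_A)\cong R_A\times\IR^{X\SM A}$; intersections of realcompact subspaces are realcompact via the diagonal), $C_p(X)$ is dense in $E(X)$, and the reduction ``dense, $C$-embedded and realcompact implies equal to $\upsilon$'' is the standard Gillman--Jerison characterization, which you argue correctly. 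But the decisive step --- that $C_p(X)$ is $C$-embedded in $E(X)$ --- is not proved, and the sketch you offer would not close. Two concrete problems. First, the contradiction is miswired: if every neighbourhood of $g^{\ast}$ meets both $U=\{h<c\}$ and $V=\{h>c\}$, then continuity of $h$ at $g^{\ast}$ forces $h(g^{\ast})=c$; nothing guarantees $h(g^{\ast})\neq c$, so there is no contradiction. (One must instead separate two cluster values by a definite gap, using $\{h<c_1\}$ and $\{h>c_2\}$ with $c_1<c_2$.) Second, and more seriously, ``every neighbourhood of $g^{\ast}$ in $C_p(X)$ meets $U$'' quantifies over basic neighbourhoods determined by arbitrary finite $F\subseteq X$, not only $F\subseteq A$; your recursion controls $u_n,v_n$ only on the countable set $A$, while the values of $u_n$ and of the interpolant $g^{\ast}$ off $A$ are both uncontrolled, so membership of $u_n$ in such a neighbourhood cannot be arranged. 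The repair you propose --- re-choosing inside the witness neighbourhood of $u_n$ a function additionally close to $f$ on the witness set --- is self-defeating: no function can be simultaneously $\delta_n$-close to $u_n$ and close to $f$ on $F_n$ when $u_n$ is far from $f$ there. You candidly flag this recursion as ``where the real content of Okunev's theorem lies,'' which is an accurate self-diagnosis: that content is missing.

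The missing ingredient is Arhangel'skii's Factorization Lemma (Lemma 0.2.3 of \cite{Arhangel}, invoked in this paper in the proof of Proposition \ref{p:nrest}): every continuous $h:C_p(X)\to\IR$ factors as $h=g\circ\pi_L$ for some countable $L\subseteq X$ and some continuous $g$ on $C(X){\restriction}_L\subseteq\IR^L$. Granting this, the $C$-embedding is immediate: for $f\in E(X)$, strict $\w$-continuity gives $f{\restriction}_L\in C(X){\restriction}_L$, so $\pi_L$ maps $E(X)$ into $C(X){\restriction}_L$ and $g\circ\pi_L$ is a continuous extension of $h$ to $E(X)$; your first paragraph then finishes the proof. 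Without that lemma (or an equivalent), the argument is incomplete at exactly the point where the theorem's difficulty is concentrated.
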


If $f:X\to Y$ is a mapping between Tychonoff spaces, the {\em adjoint map} $f^{\#}:C_p(Y)\to C_p(X)$ of $f$ is defined by $f^{\#}(g)(x):=g\big(f(x)\big)$.

We shall use the following topological result.

\begin{lemma} \label{l:closed-embedding}
Let $T:X\to Y$ and $R:X\to Z$ be embeddings. Then $T{\bigtriangleup} R:X\to Y\times Z$, $T{\bigtriangleup} R(x):=\big(T(x),R(x)\big)$, is an embedding. If, moreover, $T(X)$ is a closed subset of $Y$, then $T{\bigtriangleup} R(X)$ is a closed subset of $Y\times Z$.
\end{lemma}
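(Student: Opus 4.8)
The plan is to handle $\Delta:=T{\bigtriangleup}R$ coordinatewise, using the two projections $\pi_Y\colon Y\times Z\to Y$ and $\pi_Z\colon Y\times Z\to Z$, and then to recognize $\Delta(X)$ as the graph of a continuous map. First, $\Delta$ is continuous because $\pi_Y\circ\Delta=T$ and $\pi_Z\circ\Delta=R$ are continuous, and $\Delta$ is injective since $T$ already is (being an embedding).

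To see that $\Delta$ is an embedding it remains to show that the inverse $\Delta^{-1}\colon\Delta(X)\to X$ is continuous. For this I would use the factorization $\Delta^{-1}=T^{-1}\circ\big(\pi_Y{\restriction}_{\Delta(X)}\big)$: the projection $\pi_Y$ maps $\Delta(X)$ onto $T(X)$, sending $\big(T(x),R(x)\big)$ to $T(x)$, and $T^{-1}\colon T(X)\to X$ is continuous because $T$ is an embedding; composing returns $x$. A composition of continuous maps is continuous, so $\Delta$ is an embedding.

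For the closedness statement, assume $T(X)$ is closed in $Y$ and set $g:=R\circ T^{-1}\colon T(X)\to Z$, which is continuous. Then $\Delta(X)=\big\{(y,g(y)):y\in T(X)\big\}$ is precisely the graph of $g$. Since $Z$ is Hausdorff (all spaces here are Tychonoff), the graph of $g$ is closed in $T(X)\times Z$: if $z\ne g(y)$, separate $z$ and $g(y)$ by disjoint open sets $U,V$ in $Z$, and then $g^{-1}(V)\times U$ is a neighbourhood of $(y,z)$ missing the graph. Finally, $T(X)\times Z$ is closed in $Y\times Z$ because $T(X)$ is closed in $Y$, and a closed subset of a closed subspace is closed in the whole space; hence $\Delta(X)$ is closed in $Y\times Z$.

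This is a routine point-set argument with no genuine obstacle; the only steps meriting a moment's attention are the identity $\Delta^{-1}=T^{-1}\circ(\pi_Y{\restriction}_{\Delta(X)})$ and the use of the Hausdorff property of $Z$ to close the graph of $g$, both of which are automatic in the Tychonoff setting.
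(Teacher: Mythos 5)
Your proof is correct. For the embedding part you argue directly (continuity via the projections, injectivity from $T$, and continuity of the inverse via $\Delta^{-1}=T^{-1}\circ\pi_Y{\restriction}_{\Delta(X)}$), where the paper simply cites Theorem 2.3.20 of Engelking; both are fine. For the closedness part your route is a cleaner, more modular packaging of what is in substance the same argument: you identify $T{\bigtriangleup}R(X)$ as the graph of the continuous map $g=R\circ T^{-1}\colon T(X)\to Z$, invoke the standard fact that the graph of a continuous map into a Hausdorff space is closed in $T(X)\times Z$, and then use that $T(X)\times Z$ is closed in $Y\times Z$. The paper instead runs a bare-hands two-case separation of an arbitrary point $(y,z)\notin T{\bigtriangleup}R(X)$; its Case 1 ($y\notin T(X)$) is exactly your ``$T(X)\times Z$ is closed'' step, and its Case 2, where a neighbourhood $Q$ of $y$ is chosen with $Q\cap T(X)\subseteq T\big(R^{-1}(W)\big)$, is precisely the continuity of $g$ at $y$ that powers your closed-graph lemma. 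Your version makes the role of the Hausdorff hypothesis and of the continuity of $R\circ T^{-1}$ explicit and reusable, at the cost of quoting (or proving, as you do) the closed-graph fact; the paper's version is self-contained but hides this structure inside the neighbourhood bookkeeping.
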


\begin{proof}
The mapping $T{\bigtriangleup} R$ is an embedding by Theorem 2.3.20 of \cite{Eng}. Assume that $T(X)$ is a closed subset of $Y$. To show that $T{\bigtriangleup} R(X)$ is a closed subset of $Y\times Z$, let $(y,z)\not\in T{\bigtriangleup} R(X)$.

Assume that $y\not\in T(X)$. Since $T(X)$ is closed, there is a neighborhood $U$ of $y$ such that $U\cap T(X)=\emptyset$. Then $U\times Z$ is a neighborhood of $(y,z)$ such that $(U\times Z)\cap \big(T{\bigtriangleup} R(X)\big)=\emptyset$.

Assume that $y\in T(X)$. Since $T$ is an embedding, there is a unique $x\in X$ such that $T(x)=y$. As  $(y,z)\not\in T{\bigtriangleup} R(X)$, we have $z\not= R(x)$. Choose a neighborhood $V\subseteq Z$ of $z$ and a neighborhood $W\subseteq Z$ of $R(x)$ such that $V\cap W=\emptyset$. Choose a neighborhood $Q\subseteq Y$ of $y$ such that $Q\cap T(X) \subseteq T\big( R^{-1}(W)\big)$. Then $Q\times V$ is a neighborhood of  $(y,z)$. We claim that $(Q\times V)\cap \big(T{\bigtriangleup} R(X)\big)=\emptyset$. Indeed, suppose for a contradiction that there is $(y',z')\in (Q\times V)\cap \big(T{\bigtriangleup} R(X)\big)$. Then there is $x'\in X$ such that $T(x')=y'\in Q$ and $R(x')=z'\in V$. Therefore $T(x')\in Q\cap T(X)\subseteq T\big( R^{-1}(W)\big)$. Since $T$ and $R$ are bijective, we obtain $z'=R(x')\in W$. Therefore $z'\in V\cap W=\emptyset$, which is impossible. This contradiction finishes the proof.\qed
\end{proof}

The following result is of independent interest (possibly the existence of an embedding is known but hard to find explicitly stated, so we give also a detailed proof of this fact).

\begin{proposition} \label{p:com-met-Cp}
Each complete metrizable space $(X,\rho)$ is closely embedded into $C_p(K)$ for some compact space $K$.
\end{proposition}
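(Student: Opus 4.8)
The plan is to build the compact space $K$ directly from the metric structure of $(X,\rho)$ and to use a family of $1$-Lipschitz functions as coordinates. First I would fix a dense countable-or-larger subset is not enough, so instead I would take $K$ to be a suitable compactification of $X$ itself: concretely, let $K=\beta X$, or better, let $K$ be the closure of $X$ inside a product of closed intervals indexed by a separating family of bounded $1$-Lipschitz functions. The evaluation map $e:X\to C_p(K)$ defined by $e(x)(h):=h(x)$ for $h\in C(K)$ (equivalently, $x\mapsto(\text{the point }x\text{ viewed as a functional})$) is the candidate embedding. That $e$ is continuous and injective is routine, since the Lipschitz functions separate points and the topology of $X$ is generated by $\rho$, hence by those functions; that $e$ is an embedding follows because a basic neighborhood of $x$ in $X$ is cut out by finitely many such $h$'s, which are exactly the coordinates in $C_p(K)$.

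The substantive point is closedness of $e(X)$ in $C_p(K)$. Here I would exploit completeness of $\rho$. Suppose $\varphi\in C_p(K)$ lies in $\overline{e(X)}$. Pick the coordinate function $d_x:=\rho(x,\cdot)$; more precisely, for each $x\in X$ the function $k\mapsto\rho(x,k)$ extends to a continuous function on $K$ provided $K$ is chosen as a metric-compatible compactification, so I would be careful to build $K$ so that all the functions $\rho(x,\cdot)$, $x\in X$, and their truncations $\min\{\rho(x,\cdot),1\}$, are among the continuous functions on $K$. Given $\varphi\in\overline{e(X)}$, evaluate $\varphi$ against the (truncated) distance functions: the net/filter of points $x_\alpha\in X$ with $e(x_\alpha)\to\varphi$ must then satisfy $\rho(x_\alpha,x_\beta)\to 0$ (because $\varphi$ assigns a well-defined value to each coordinate $\min\{\rho(x,\cdot),1\}$, forcing the $x_\alpha$ to be $\rho$-Cauchy). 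By completeness this net converges to some $x_\infty\in X$, and by continuity of $e$ we get $\varphi=e(x_\infty)$. Hence $e(X)$ is closed.

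The main obstacle I expect is the bookkeeping needed to guarantee that $K$ is genuinely compact while still carrying enough continuous functions to (a) separate the points of $X$ and recover its topology, and (b) detect Cauchy-ness, i.e., that the functions $\min\{\rho(x,\cdot),1\}$ for $x\in X$ all extend continuously to $K$. The clean way is: let $\Phi=\{\min\{\rho(x,\cdot),1\}:x\in X\}$, embed $X$ into $[0,1]^{\Phi}$ via $x\mapsto(\min\{\rho(x',x),1\})_{x'\in X}$, and let $K$ be the closure of the image; $K$ is compact as a closed subset of a cube, each coordinate projection is a continuous function on $K$ extending the corresponding member of $\Phi$, the map is an embedding of $X$ since $\rho$ generates the topology, and the Cauchy argument above applies verbatim because on $K\subseteq[0,1]^{\Phi}$ the coordinates are exactly the needed distance functions. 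Then compose with the canonical embedding $K\hookrightarrow C_p(C_p(K))$—rather, directly: since $K$ sits in a cube, and a point of $K\subseteq[0,1]^{\Phi}$ is the same as an evaluation functional on the coordinate functions, one passes to $C_p(K)$ by $x\mapsto e(x)$ as above, and Lemma~\ref{l:closed-embedding} together with the closedness just proved finishes the argument. The only real care is checking that the Cauchy condition is forced; that is where completeness of $(X,\rho)$ is used and is the heart of the proof.
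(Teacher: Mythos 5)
Your overall strategy---realize $X$ inside $C_p(K)$ for a compact set $K$ built from $1$-Lipschitz functions, use the evaluation map, and invoke completeness of $\rho$ for closedness---is the same as the paper's, but your specific choice of $K$ is too small, and the step where you claim the approximating net must be $\rho$-Cauchy is false. You take $K$ to be the closure of the image of $X$ in $[0,1]^{\Phi}$ where $\Phi=\{\min\{\rho(x,\cdot),1\}:x\in X\}$, and you argue that since $\varphi\in\overline{e(X)}$ assigns a well-defined value to each coordinate $\min\{\rho(x,\cdot),1\}$, any net $(x_\alpha)$ with $e(x_\alpha)\to\varphi$ must be $\rho$-Cauchy. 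Counterexample: let $X=\NN$ with the discrete metric (which is complete) and let $(x_n)$ be any injective sequence. Then $\min\{\rho(y,x_n),1\}\to 1$ for every $y\in X$, so all coordinate limits exist, yet $(x_n)$ is not Cauchy. Concretely, here your $K$ is the closure of $\{\mathbf{1}-\chi_{\{y\}}:y\in\NN\}$ in $[0,1]^{\NN}$, i.e.\ a convergent sequence together with its limit $\mathbf{1}$; the images $e(x_n)$ converge in $C_p(K)$ to the constant function $\mathbf{1}$ on $K$, which is continuous on $K$ but lies outside $e(X)$. So with your $K$ the image is genuinely not closed---the statement becomes false, not merely unproved---and the same example defeats the variant $K=\beta X$. (A secondary issue: as written, $e(x)(h):=h(x)$ for $h\in C(K)$ defines a point of $C_p(C_p(K))$, not of $C_p(K)$; one must evaluate at $h\in K$, viewing $K$ as a compact subset of $C_p(X)$.)

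The paper takes $K$ to be the set of \emph{all} $f$ with $\|f\|_\infty\leq 1$ and $|f(x)-f(y)|\leq\rho(x,y)$, which is still compact in $C_p(X)$ (closed in $\IR^X$, pointwise bounded, equicontinuous) but is rich enough for the closedness step to work. The correct logic is not ``the coordinate limits exist, hence the net is Cauchy,'' but rather ``if the net is not Cauchy, then the limit functional $F(f)=\lim_\alpha f(x_\alpha)$ cannot be continuous on $K$,'' and for that one needs the full Lipschitz ball rather than just the distance functions. In the example above the paper's $K$ contains every $f:\NN\to[0,1]$, and the would-be limit is a limit along a free ultrafilter: it sends each $\chi_{\{1,\dots,n\}}$ to $0$ and $\mathbf{1}$ to $1$, although $\chi_{\{1,\dots,n\}}\to\mathbf{1}$ in $K$, so it is not continuous on $K$ and does not belong to $\overline{e(X)}$ computed inside $C_p(K)$. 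Enlarging $K$ in this way, and then running the Cauchy/completeness argument only for limit functionals that are continuous on $K$, is the missing idea.
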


\begin{proof}
Consider the following subset of $C_p(X)$
\[
K:=\big\{ f\in C_p(X): \|f\|_\infty\leq 1 \mbox{ and } |f(x)-f(y)|\leq \rho(x,y) \mbox{ for all } x,y\in X\big\}.
\]
It is easy to see that $K$ is a closed subspace of $\IR^X$, which is equicontinuous and contained in $C(X)$.
Being also pointwise bounded, $K$ is a compact subset of $\IR^X$ and hence of $C_p(X)$.
It remains to prove that the mapping $\varphi:X\to C_p(K)$ defined by
\[
 \varphi(x)(f):=f(x),\quad \mbox{ where } \; x\in X \; \mbox{ and }\; f\in K,
\]
is an embedding with closed image. Clearly, $\varphi$ is continuous.

To show that $\varphi$ is injective,  fix $x_0\in X$ and  observe that the function $g(x):=\tfrac{\rho(x,x_0)}{\rho(x,x_0)+1}$ belongs to $K$ by the triangle inequality. Since $g(x)\not=g(x_0)=0$, $\varphi$ is injective.

We claim that $\varphi:X\to \varphi(X)$ is open. Indeed, since $K$ is closed in $C_p(X)$, the restriction mapping $\pi_K:C_p(C_p(X))\to C_p(K)$ is open onto its image by Proposition 0.4.1(2) of \cite{Arhangel}. Recall also (see Corollary 0.5.5 of \cite{Arhangel}) that the mapping $\psi:X\to C_p(C_p(X))$, $\psi(x)(f)=f(x)$, is an embedding. Since $\varphi=\pi_K\circ\psi$, we obtain that $\varphi:X\to \varphi(X)$ is open.

It remains to prove that $\varphi(X)$ is closed in $C_p(K)$. Let $F\in \overline{\varphi(X)}$. Take a net $\{\varphi(x_i)\}_{i\in I}$ in $\varphi(X)$ converging to $F$. Then $\{\varphi(x_i)\}_{i\in I}$ is a Cauchy net in $\varphi(X)$. Since $\varphi$ is an embedding, $\{x_i\}_{i\in I}$ is a Cauchy net in $X$ considering as a uniform space. Taking into account that $X$ is complete also as a uniform space (see Proposition 8.3.5 of \cite{Eng}), Theorem 8.3.20 of \cite{Eng} implies that $x_i\to x$ for some $x\in X$. Clearly, $\varphi(x)=F$. Thus $\varphi(X)$ is closed in $C_p(K)$.\qed
\end{proof}
\smallskip

\begin{proposition}\label{p:nrest}
Let $X$ be a normal space,  and let $M\subseteq C_p(C_p(X))$.
Then there exists $L\subseteq X$ such that $|L|\leq |M|$ and $\overline{M}$ is closely embedded in $C_p(C_p(\cl L))$.
\end{proposition}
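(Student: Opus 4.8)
The plan is to use the restriction map at two levels and a standard downward Löwenheim--Skolem style closure argument to find a suitable $L$. Recall from Corollary~0.5.5 of \cite{Arhangel} that the evaluation map $\psi_X\colon X\to C_p(C_p(X))$, $\psi_X(x)(f)=f(x)$, is an embedding, and from Proposition~0.4.1 of \cite{Arhangel} that for a closed subspace $A$ of a space $Z$ the restriction map $\pi_A\colon C_p(Z)\to C_p(A)$ is an open continuous surjection onto its image (and it is surjective onto all of $C_p(A)$ when $Z$ is normal, by the Tietze extension theorem). The point is that each element of $C_p(C_p(X))$, restricted to a suitable closed set, ``lives'' on the double dual of a small closed subspace of $X$.

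First I would fix, for each $F\in M$, a countable-or-small witness: since the ambient object $C_p(C_p(X))$ has as its ``coordinates'' the functions $f\in C_p(X)$, I instead work on the $X$-side. More precisely, I would build an increasing sequence $L_0\subseteq L_1\subseteq\cdots$ of subsets of $X$ together with auxiliary sets of functions, alternating as follows: start with any $L_0\subseteq X$ with $|L_0|\le|M|$ (nonempty); given $L_n$, for every pair of distinct functions in $C_p(\overline{L_n})$ that are separated by some $F\in M$ (viewed appropriately), throw into $L_{n+1}$ enough points of $X$ to witness the relevant separations and to witness that the closures behave correctly; keep $|L_{n+1}|\le|M|$ using normality of $X$ (Tietze extensions are chosen once per requirement). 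Then set $L=\bigcup_n L_n$, so $|L|\le|M|\cdot\aleph_0=|M|$ (when $M$ is infinite; the finite case is handled by the same construction stabilising). The closure $\overline L$ is a normal (as a closed subspace of a normal space) space, and the restriction map $\pi_{\overline L}\colon C_p(X)\to C_p(\overline L)$ is an open surjection.

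The map to use is the adjoint of $\pi_{\overline L}$, namely $(\pi_{\overline L})^{\#}\colon C_p(C_p(\overline L))\to C_p(C_p(X))$, $(\pi_{\overline L})^{\#}(H)(f)=H(\pi_{\overline L}(f))=H(f\restriction_{\overline L})$. Since $\pi_{\overline L}$ is a continuous surjection, its adjoint is an embedding (the adjoint of a surjection is an embedding, cf.\ the standard $C_p$-theory fact that $f^{\#}$ is an embedding whenever $f$ has dense image, and here the image is all of $C_p(\overline L)$). So it suffices to show that $\overline M\subseteq (\pi_{\overline L})^{\#}\big(C_p(C_p(\overline L))\big)$ and that this image is closed, or rather that $(\pi_{\overline L})^{\#}$ restricted to the preimage of $\overline M$ is a closed embedding onto $\overline M$. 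Concretely, I would define $R\colon \overline M\to C_p(C_p(\overline L))$ by $R(F)(g):=F(g^\ast)$ where $g^\ast\in C_p(X)$ is \emph{any} extension of $g\in C_p(\overline L)$; the construction of $L$ is exactly what guarantees this is well defined (independent of the chosen extension) and continuous, using that $t_R$-type / strong $\w$-continuity considerations localise to $\overline L$. Then $R$ is the inverse of $(\pi_{\overline L})^{\#}$ on $\overline M$, so $(\pi_{\overline L})^{\#}$ closely embeds $\overline M$ into $C_p(C_p(\overline L))$, because $\overline M$ is closed in $C_p(C_p(X))$ and a homeomorphic copy of a closed set under an embedding whose inverse is continuous stays closed in the target (equivalently, $(\pi_{\overline L})^{\#}$ is a closed map onto its image).

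The main obstacle is the well-definedness of $R$, i.e.\ showing that each $F\in\overline M$ takes the same value on any two functions $f_1,f_2\in C_p(X)$ with $f_1\restriction_{\overline L}=f_2\restriction_{\overline L}$. This is precisely where the iterative construction of $L$ must be engineered: at stage $n$ one must arrange that no $F\in M$ (hence, by passing to the closure, no $F\in\overline M$, since the property ``$F(f_1)=F(f_2)$ for this fixed pair'' is closed in $F$) can distinguish two functions agreeing on $\overline{L_{n}}$ unless they are already distinguished on $\overline{L_{n+1}}$. One handles this by a bookkeeping argument over a dense (in a suitable sense) family of pairs; the delicate point is that $C_p(X)$ is not small, so one cannot enumerate all pairs of functions — instead one enumerates requirements indexed by finite data (finite sets of coordinates $f$ and rational thresholds) and uses that continuity of the relevant maps is determined on a countable network, together with $|M|$-many choices. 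Once $L$ is closed off under all these requirements and under ``$\overline{L_n}\cap(\text{relevant closed sets})$'' conditions, both the well-definedness and the continuity of $R$ follow, and the cardinality bound $|L|\le|M|$ is preserved throughout.
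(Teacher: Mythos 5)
Your overall architecture is the same as the paper's: restrict to a small closed set $\overline{L}$, use that normality makes $\pi_{\overline{L}}\colon C_p(X)\to C_p(\overline{L})$ an open (hence quotient) surjection, and pull $\overline{M}$ back through the adjoint $\pi_{\overline{L}}^{\#}\colon C_p(C_p(\overline{L}))\to C_p(C_p(X))$, which is a closed embedding. But the heart of the proof is missing. Everything reduces to showing that each $F\in M$ depends only on the restriction of its argument to some set $L_F\subseteq X$ with $|L_F|$ controlled --- equivalently, that $F$ factors as $g_F\circ\pi_{L_F}$ for a countable $L_F$ and a continuous $g_F$ on $\pi_{L_F}(C_p(X))$. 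You correctly identify this (the well-definedness and continuity of your map $R$) as ``the main obstacle,'' but you then defer it to an unexecuted ``bookkeeping argument'' over ``requirements indexed by finite data,'' described only in the conditional (``I would build\dots'', ``one must arrange\dots''). This is not a routine verification: it is precisely the Factorization Lemma for continuous real-valued maps on dense subspaces of products (Lemma~0.2.3 of \cite{Arhangel}), which the paper invokes to get, for each $f\in M$, a countable $L_f$ and a continuous $g_f$ with $f=g_f\circ\pi_{L_f}$; setting $L=\bigcup_{f\in M}L_f$ then gives both the cardinality bound and, immediately, the continuity of the induced map $\tilde f(r)=g_f(r\restriction_{L_f})$ on $C_p(\overline{L})$. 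Without that lemma (or a complete reproof of it), your $R$ is not known to be well defined, let alone continuous, so the proof has a genuine gap.

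Two smaller points. First, your justification that the adjoint is an embedding is off: dense image of $\pi_{\overline{L}}$ gives only injectivity of $\pi_{\overline{L}}^{\#}$; to get a (closed) embedding one needs $\pi_{\overline{L}}$ to be ($\IR$-)quotient, which here follows from its being open and surjective (normality plus Tietze--Urysohn give $C_p(\overline{L}|X)=C_p(\overline{L})$), as in Proposition~0.4.8 of \cite{Arhangel}. Second, your handling of $\overline{M}$ is more laborious than necessary: once $M$ lands in the image of $\pi_{\overline{L}}^{\#}$, that image is closed, so it automatically contains $\overline{M}$, and no separate well-definedness check for limit points $F\in\overline{M}\setminus M$ is needed.
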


\begin{proof}
The case of finite $M$ is trivial. We assume that $M$ is infinite.
For $Q\subseteq X$ and $h\in C_p(X)$, we put $\pi_{Q}(h):=h{\restriction}_Q$  and $C_p(Q|X):=\pi_{Q}(C_p(X))\subseteq C_p(Q)$. By Proposition~0.4.1 of  \cite{Arhangel}, the mapping $\pi_{Q}: C_p(X)\to C_p(Q|X)$ is continuous and it is open provided that $Q$ is closed in $X$. For $f\in M$, by Factorization Lemma 0.2.3 of  \cite{Arhangel}, there are a countable $L_f\subseteq X$ and a continuous function $g_f: C_p(L_f|X)\to \IR$  such that $f=g_f\circ \pi_{L_f}$. Let $L=\bigcup _{f\in M} L_f$. Clearly, $|L|\leq |M|$.

We will prove that $\overline{M}$ is closely embedded in $C_p(C_p(\overline{L}))$. Since $X$ is a normal space, then, by the Tietze--Urysohn theorem, $C_p(\cl L|X)=C_p(\cl L)$. As the mapping $\pi_{\overline{L}}: C_p(X)\to C_p(\overline{L})$ is continuous and open, the mapping $\pi_{\overline{L}}$ is a quotient mapping. Therefore, by Proposition~0.4.8 of  \cite{Arhangel},  the mapping
\[
\pi_{\overline{L}}^{\#}: C_p(C_p(\overline{L}))\to C_p(C_p(X)),\quad r\mapsto r\circ \pi_{\overline{L}},
\]
is a closed embedding of $C_p(C_p(\overline{L}))$ into $C_p(C_p(X))$. It remains to prove that $M\subseteq C_p(C_p(\overline{L}))$. Let $f\in M$. Put
\[
\tilde f: C_p(\overline{L})\to \R,\ r\mapsto g_f(r{\restriction}_{L_f}).
\]
Since $L_f\subseteq \overline{L}$, for every $r\in C_p(X)$, we have
\[
\pi_{\overline{L}}^{\#}(\tilde f)(r)= \tilde f\circ \pi_{\overline{L}}(r)= g_f\big( r\circ \pi_{\overline{L}}{\restriction}_{L_f}\big)=g_f\big(r{\restriction}_{L_f}\big)= g_f\circ \pi_{L_f}(r)=f(r).
\]
Thus $\pi_{\overline{L}}^{\#}(\tilde f)=f$.\qed
\end{proof}


\section{Permanent properties of $\kappa$-spaces} \label{sec:k-Cp-spaces}


We start this section with the following theorem.

\begin{theorem} \label{t:k-Cp-spaces}
For a Tychonoff space $X$, the following assertions are equivalent:
\begin{enumerate}
\item[{\rm(i)}] $X$ is closely embedded into $\prod_{\alpha\in \AAA}C_p(K_\alpha)$ for some family $\{K_\alpha\}_{\alpha\in\AAA}$ of compact spaces;
\item[{\rm(ii)}] $X$ is  closely embedded into $C_p(Y)$, where $Y$ is the topological sum of a family of compact spaces;
\item[{\rm(iii)}] $X$ is  closely embedded into $C_p(Y)$ for some locally compact paracompact space $Y$;
\item[{\rm(iv)}] $X$ is  closely embedded into $C_p(Y)$ for some locally compact space $Y$;
\item[{\rm(v)}] $X$ is  closely embedded into $C_p(Y)$ for some $k$-space $Y$;
\item[{\rm(vi)}] $X$ is closely embedded into $C_p(Y)$ for some $k_\IR$-space $Y$;
\item[{\rm(vii)}] the diagonal mapping
\[
\psi:=\bigtriangleup_{K\in\KK\big(C_p(X)\big)} \phi_K: X\to \prod_{K\in\KK\big(C_p(X)\big)} C_p(K)
\]
is an embedding with closed image, where $\phi_K:X\to C_p(K)$ is defined by $\phi_K(x)(s):=s(x)$ $(s\in K)$.
\end{enumerate}
\end{theorem}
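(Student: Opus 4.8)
The plan is to establish the cycle of implications $(\mathrm{vii})\Rightarrow(\mathrm{i})\Rightarrow(\mathrm{ii})\Rightarrow(\mathrm{iii})\Rightarrow(\mathrm{iv})\Rightarrow(\mathrm{v})\Rightarrow(\mathrm{vi})$ together with the hard direction $(\mathrm{vi})\Rightarrow(\mathrm{vii})$. Most of these are routine. The implication $(\mathrm{vii})\Rightarrow(\mathrm{i})$ is immediate since each $K\in\KK(C_p(X))$ is compact and the diagonal map in (vii) witnesses (i). For $(\mathrm{i})\Rightarrow(\mathrm{ii})$: given a closed embedding $X\hookrightarrow\prod_{\alpha}C_p(K_\alpha)$, set $Y:=\bigoplus_\alpha K_\alpha$; then $C_p(Y)\cong\prod_\alpha C_p(K_\alpha)$ canonically (a continuous function on a topological sum is just a tuple of continuous functions on the summands, and the pointwise topologies match), so $X$ closely embeds in $C_p(Y)$. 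The chain $(\mathrm{ii})\Rightarrow(\mathrm{iii})\Rightarrow(\mathrm{iv})$ uses only that a topological sum of compacta is locally compact and paracompact, and that locally compact paracompact spaces are locally compact. The chain $(\mathrm{iv})\Rightarrow(\mathrm{v})\Rightarrow(\mathrm{vi})$ uses the standard facts that locally compact spaces are $k$-spaces, $k$-spaces are $k_\IR$-spaces (as recorded in the diagram in Section~\ref{sec:preliminary}), and that these properties are inherited by the ambient space $Y$ in the statement, not by $X$.

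The substantive work is $(\mathrm{vi})\Rightarrow(\mathrm{vii})$. Suppose $X$ is closely embedded in $C_p(Y)$ for some $k_\IR$-space $Y$; identify $X$ with a closed subspace of $C_p(Y)$. First I would show $\psi$ is injective and continuous: injectivity is clear because distinct points $x\neq x'$ of $X\subseteq C_p(Y)$ are separated by some $y\in Y$ viewed as an element of $C_p(C_p(Y))$, and the evaluation at a suitable singleton-generated compact set in $C_p(X)$ distinguishes them; continuity of each $\phi_K$ is the standard evaluation-map argument. To see $\psi$ is a homeomorphism onto its image, it suffices that the family $\{\phi_K: K\in\KK(C_p(X))\}$ separates points from closed sets, equivalently that $\psi$ is open onto its image; here the key point is that $X$, being a subspace of $C_p(Y)$, carries the topology inherited from the compact subsets of $C_p(X)$ in the appropriate sense — more precisely, for $x\in X\subseteq C_p(Y)$ a basic neighborhood is determined by finitely many points of $Y$, and finite subsets of $Y$ generate (via evaluation) compact, even finite, subsets $K$ of $C_p(X)$ on which $\phi_K$ recovers that neighborhood.

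The genuinely delicate step, which I expect to be the main obstacle, is showing that $\psi(X)$ is \emph{closed} in $\prod_{K}C_p(K)$. This is where the $k_\IR$-property of $Y$ must be used in an essential way (Remark~\ref{rem:Ascoli-non-kappa} shows Ascoli is not enough). The strategy: take $F=(F_K)_K$ in the closure of $\psi(X)$; I want to produce $x\in X$ with $\psi(x)=F$. The coordinates $F_K$ are mutually compatible, so they assemble into a function on $\bigcup_K K$; the crucial sub-task is to produce from $F$ a point of $C_p(Y)$, i.e.\ a continuous function $h:Y\to\IR$, and then to check $h\in X$ using that $X$ is closed in $C_p(Y)$. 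To get continuity of $h$, one uses that $Y$ is a $k_\IR$-space: it is enough to show $h\restriction_L$ is continuous for every $L\in\KK(Y)$, and each such $L$, via evaluation $y\mapsto(\text{ev}_y\colon C_p(X)\to\IR)$, maps $Y$ into $C_p(C_p(X))$ in a way that sends $L$ to a compact set; pairing this with the compact sets $K\subseteq C_p(X)$ and the compatibility of the $F_K$ lets one realize $h\restriction_L$ as a pointwise limit that is in fact continuous on $L$. Once $h\in C(Y)$ is obtained, compatibility of the coordinates forces $F_K=\phi_K(h)$ for all $K$, and the fact that $h$ lies in the closure of $X$ in $C_p(Y)$ together with closedness of $X$ yields $h\in X$, hence $F=\psi(h)\in\psi(X)$. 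Throughout, Lemma~\ref{l:closed-embedding} is the tool that lets one pass between the diagonal map and its individual coordinates while controlling closedness of images.
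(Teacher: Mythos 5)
Your proposal is correct, and for the one non-trivial implication it takes a genuinely different route from the paper. The paper never proves (vi)$\Rightarrow$(vii) directly: it first gets (vi)$\Rightarrow$(i) by quoting Del'gadil'o's theorem that $C_p(Y)$ is closely embedded in $\prod_{K\in\KK(Y)}C_p(K)$ whenever $Y$ is strongly functionally generated by its compact subsets (which is a reformulation of the $k_\IR$-property), and then proves (i)$\Rightarrow$(vii) by a separate adjoint-map argument: from a closed embedding $T:X\to\prod_\alpha C_p(D_\alpha)$ it manufactures compact sets $Q_\alpha=f_\alpha(D_\alpha)\subseteq C_p(X)$, factors $T=F\circ\xi$ through the diagonal $\xi$ of the $\phi_{Q_\alpha}$ using that each $f_\alpha^{\#}:C_p(Q_\alpha)\to C_p(D_\alpha)$ is a closed embedding, and then enlarges $\xi$ to $\psi$ via Lemma \ref{l:closed-embedding}. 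Your direct argument for (vi)$\Rightarrow$(vii) — assemble the compatible coordinates $F_K$ of a point $F\in\overline{\psi(X)}$ into a $k$-continuous function on $C_p(X)$, pull it back along the evaluation map $e:Y\to C_p(X)$ to a $k$-continuous (hence, by the $k_\IR$-property, continuous) function $h$ on $Y$, and conclude $h\in X$ from closedness of $X$ in $C_p(Y)$ — is sound and has the advantage of being self-contained: it simultaneously reproves the special case of Del'gadil'o's theorem that the paper imports, at the cost of losing the explicit factorization $T=F\circ\xi$ that identifies which compact subsets of $C_p(X)$ do the work. Two small points to fix in the write-up: the evaluation $y\mapsto \mathrm{ev}_y{\restriction}_X$ lands in $C_p(X)$, not in $C_p(C_p(X))$ (it is precisely because $e(L)\in\KK(C_p(X))$ that the coordinate $F_{e(L)}$ gives continuity of $h{\restriction}_L$); and the fact that $\psi$ is an embedding needs no appeal to the ambient $C_p(Y)$ at all — singletons of $C(X)$ are compact, so the family $\{\phi_K\}$ separates points from closed sets in any Tychonoff space, and Theorem 2.3.20 of Engelking applies.
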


\begin{proof}
The implications (i)$\Leftrightarrow$(ii)$\Ra$(iii)$\Ra$(iv)$\Ra$(v)$\Ra$(vi) and (vii)$\Ra$(i) are evident.
\smallskip

(vi)$\Ra$(i) Let $Y$ be a  $k_\IR$-space for which $X$ is  closely embedded into $C_p(Y)$. Observe that, by definition, a space is a $k_\IR$-space if, and only if, it is strongly functionally generated by the family of all its compact subsets.  It follows from Theorem 2(1) of \cite{Delgadilo2000} that $C_p(Y)$ is closely embedded in $\prod_{K\in\KK(Y)}C_p(K)$.
\smallskip

(i)$\Ra$(vii) Let $T$ be an embedding of $X$ into  $\prod_{\alpha\in \AAA}C_p(D_\alpha)$ for some family $\{D_\alpha\}_{\alpha\in\AAA}$ of compact spaces. For every $\alpha\in\AAA$, let $\pi_\alpha$ denote the $\alpha$-th coordinate projection. Then the mapping $\pi_\alpha\circ T:X\to C_p(D_\alpha)$ is continuous. Denote by $f_\alpha$ the restriction onto $D_\alpha$ of the adjoint mapping
\[
\big(\pi_\alpha\circ T\big)^{\#}: C_p\big(C_p(D_\alpha)\big) \to C_p(X).
\]
Set $Q_\alpha:=f_\alpha(D_\alpha)$. Then $Q_\alpha\in \KK\big(C_p(X)\big)$ and $f_\alpha$ is a quotient map. Therefore, by Proposition 0.4.6(2) and Corollary 0.4.8(2) of \cite{Arhangel}, the adjoint mapping $f_\alpha^{\#}: C_p(Q_\alpha)\to C_p(D_\alpha)$ is an embedding with closed image. Whence, by Proposition 2.3.2 and Corollary 2.3.4 of \cite{Eng}, the product mapping
\[
F:= \prod_{\alpha\in \AAA} f_\alpha^{\#}: \prod_{\alpha\in \AAA} C_p(Q_\alpha) \to \prod_{\alpha\in \AAA} C_p(D_\alpha)
\]
is also an embedding with closed image. Let
\[
\xi:= \bigtriangleup_{\alpha\in \AAA} \;\phi_{Q_\alpha}: X\to \prod_{\alpha\in \AAA} C_p(Q_\alpha)
\]
be the diagonal map. For every $x\in X$ and each $d\in D_\alpha$, we have
\[
f_\alpha^{\#}\circ \phi_{Q_\alpha}(x)(d)=\phi_{Q_\alpha}(x)\big( f_\alpha(d)\big)=f_\alpha(d)(x)=\pi_\alpha\circ T(x)(d).
\]
Therefore $T=F\circ \xi$. Since $T$ and $F$ are embeddings with closed image, we obtain that also $\xi$ is an embedding with closed image.

Set $I:= \KK\big(C_p(X)\big)\SM \{Q_\alpha\}_{\alpha\in\AAA}$, and let $\chi:= \bigtriangleup_{K\in I} \;\phi_K: X\to \prod_{K\in I} C_p(K)$ be the diagonal map. Then $\psi=\xi{\bigtriangleup} \chi$. Therefore, by Theorem 2.3.20 of \cite{Eng}, $\psi$ is also an embedding. Applying Lemma \ref{l:closed-embedding} we obtain that $\psi$ has closed image, as desired.\qed
\end{proof}

Theorem \ref{t:k-Cp-spaces} motivates to introduce the following class of Tychonoff spaces.

\begin{definition} \label{def:kk-space} {\em
A Tychonoff spaces is called a {\em $\kappa$-space} if it satisfies the conditions (i)-(vii) of Theorem \ref{t:k-Cp-spaces}.} 
\end{definition}

The following proposition stands some permanent properties of $\kappa$-spaces.

\begin{proposition} \label{p:kk-space-permanent}
\begin{enumerate}
\item[{\rm(i)}] An arbitrary product of $\kappa$-spaces is a $\kappa$-space.
\item[{\rm(ii)}] A closed subspace of a  $\kappa$-space is a $\kappa$-space.
\item[{\rm(iii)}] Each realcompact space is a $\kappa$-space.
\item[{\rm(iv)}] If $\mathcal{M}$ is a family of $\kappa$-subspaces of a Tychonoff space $X$, then $\bigcap\mathcal{M}$ is a $\kappa$-space.
\item[{\rm(v)}] If $f:X\to Y$ is a perfect mapping and $Y$ is a $\kappa$-space, then also $X$ is a $\kappa$-space.
\item[{\rm(vi)}] If $C_p(X)$ is a $k_\IR$-space, then $X$ is a $\kappa$-space.
\item[{\rm(vii)}] Each $\kappa$-space is a $\mu$-space.
\item[{\rm(viii)}] Each metrizable space is a $\kappa$-space.
\item[{\rm(ix)}] Each Dieudonn\'{e} complete space is a $\kappa$-space.
\item[{\rm(x)}] A pseudocompact space is a $\kappa$-space if, and only if, it is compact. Consequently, the first countable pseudocompact space $\w_1$ is not a   $\kappa$-space.
\item[{\rm(xi)}] If $X$ is a $k_\IR$-space, then $C_p(X)$ is a $\kappa$-space.
\end{enumerate}
\end{proposition}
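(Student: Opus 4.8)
\textbf{Proof proposal for Proposition \ref{p:kk-space-permanent}.}

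Most items follow directly from Theorem \ref{t:k-Cp-spaces} and standard $C_p$-theory; the genuinely new content is (xi). For (i) one embeds each factor $X_\alpha$ closely into $C_p(Y_\alpha)$ with $Y_\alpha$ a topological sum of compacta (characterization (ii)); then $\prod_\alpha X_\alpha$ embeds closely into $\prod_\alpha C_p(Y_\alpha)$, which by the standard identification is $C_p\big(\bigoplus_\alpha Y_\alpha\big)$, and $\bigoplus_\alpha Y_\alpha$ is again a topological sum of compacta, so condition (ii) holds. Item (ii) is immediate: a closed subspace of a closed subspace of $C_p(Y)$ is again a closed subspace of $C_p(Y)$. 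Item (iii) is the classical theorem that a realcompact space is closely embedded in $C_p(D)$ for a discrete $D$, and a discrete space is locally compact, so (iv) of Theorem \ref{t:k-Cp-spaces} applies; (viii) follows from Proposition \ref{p:com-met-Cp} applied to the completion together with (ii), or directly since a metrizable space is realcompact iff \dots\ — cleanest is: a metrizable space embeds closely in its completion, which by Proposition \ref{p:com-met-Cp} and a partition-of-unity/topological-sum argument embeds closely in $C_p(K)$-type spaces; and (ix) follows since a Dieudonn\'e complete space is a closed subspace of a product of metrizable spaces, hence of a product of $\kappa$-spaces, hence a $\kappa$-space by (i) and (ii). For (iv) (intersection), I would use characterization (vii): if each $M\in\mathcal M$ maps by its $\psi_M$ to a closed subspace of the relevant product, then on $\bigcap\mathcal M$ the diagonal of all these maps, composed with the diagonal over $\KK(C_p(\bigcap\mathcal M))$, is an embedding with closed image — alternatively, realize $\bigcap\mathcal M$ as a closed subspace of $\prod_{M\in\mathcal M} M$ via the diagonal and invoke (i) and (ii). Item (v): a perfect preimage of a closed subspace of $C_p(Y)$ — here I would embed $Y$ closely in $C_p(Z)$, note $X$ is homeomorphic to a closed subspace of $Y\times K$ for suitable compact $K$ only in special cases, so more carefully: a perfect map onto a $\mu$-space has $\mu$-space domain is known, but for $\kappa$-spaces use that $X$ embeds (by the perfect map together with an embedding of $Y$) as a closed subspace of $C_p(Z)\times(\text{something locally compact})$; the safest route is: $Y$ closed in some $C_p(Z)$ with $Z$ locally compact, and a perfect map composed with a closed embedding gives that $X$ is closed in $C_p(Z)\times\beta X$ — one must check the product of $C_p(Z)$ with a compact space is of the required form, which it is since $C_p(Z)\times C_p(K)=C_p(Z\oplus K)$. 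Items (vi) and (xi) are the $C_p$-duality pair: (vi) is immediate from the definition of $\kappa$-space using characterization (vi) of Theorem \ref{t:k-Cp-spaces} with $Y=C_p(X)$; (vii) is Proposition's own consequence since a $\kappa$-space is closed in $C_p(Y)$ with $Y$ a $k_\IR$-space, and $C_p$ of any space is a $\mu$-space, a closed subspace of a $\mu$-space is a $\mu$-space — actually one needs $C_p(Y)$ to be a $\mu$-space, which holds for all $Y$ by a theorem of... here I would cite the known fact that $C_p(Y)$ is always a $\mu$-space (indeed it is even a linearly $\mu$-complete / angelic-type argument), or deduce it from realcompactness considerations via Okunev's Theorem \ref{t:Okunev}. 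Item (x) follows from (vii): a pseudocompact $\mu$-space is compact; and $\w_1$ (with the order topology) is first countable, pseudocompact, not compact.

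The main obstacle, and the real theorem, is (xi): \emph{if $X$ is a $k_\IR$-space then $C_p(X)$ is a $\kappa$-space}. By Theorem \ref{t:k-Cp-spaces}(i) it suffices to embed $C_p(X)$ closely into a product $\prod_{\alpha}C_p(K_\alpha)$ of spaces $C_p(K_\alpha)$ with $K_\alpha$ compact. The natural candidate is the family $\{K\}_{K\in\KK(X)}$ of compact subsets of $X$ and the diagonal of the restriction maps
\[
\Phi:=\bigtriangleup_{K\in\KK(X)}\pi_K:\ C_p(X)\ \longrightarrow\ \prod_{K\in\KK(X)}C_p(K),\qquad \pi_K(f)=f{\restriction}_K.
\]
This is always continuous and injective; it is an embedding because the topology of $C_p(X)$ is generated by finite subsets, each of which lies in some $K$, so a basic neighborhood is pulled back from one coordinate. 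The crux is \emph{closedness of the image}: if a net $(f_i)$ in $C_p(X)$ is such that $(\pi_K f_i)_i$ converges in $C_p(K)$ for every compact $K$, we must produce $f\in C(X)$ with $\pi_K f=\lim_i \pi_K f_i$ for all $K$. Pointwise this defines a function $f:X\to\IR$ (take $K=\{x\}$), and its restriction to every $K\in\KK(X)$ is continuous by hypothesis, i.e.\ $f$ is $k$-continuous; since $X$ is a $k_\IR$-space, $f$ is continuous, so $f\in C_p(X)$ and $\Phi(f)$ is the limit. Hence $\Phi(C_p(X))$ is closed, establishing (xi). One should double-check the compatibility of the limits across different compacta (the value $\lim_i f_i(x)$ does not depend on which $K\ni x$ is chosen, which is clear since convergence in each coordinate is evaluated pointwise), and that $\Phi$ being an embedding onto its image uses nothing beyond the definition of the pointwise topology; the $k_\IR$ hypothesis is used exactly once, and exactly where it must be.

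Thus the write-up would proceed: (vi), (vii), (x) as one-line deductions; (i)–(iv), (viii), (ix) via Theorem \ref{t:k-Cp-spaces} and products/closed subspaces; (v) via realizing $X$ as a closed subspace of a product $C_p(Z)\times C_p(K)=C_p(Z\oplus K)$ using the perfect map; and (xi) via the restriction-diagonal $\Phi$ above, with the $k_\IR$-property supplying closedness of the image. I expect (v) and the precise bookkeeping in (xi)'s closedness argument (across the index set $\KK(X)$) to be the only places needing care; everything else is formal.
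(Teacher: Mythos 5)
Your outline matches the paper for most items, but two of them have genuine gaps. The most serious is (viii): the assertion that ``a metrizable space embeds closely in its completion'' is false unless the space is already complete ($\Q$ is dense, not closed, in $\IR$), and the fallback ``metrizable $\Rightarrow$ realcompact'' also fails (a discrete space of measurable cardinality is metrizable and not realcompact). Proposition \ref{p:com-met-Cp} only covers the \emph{completely} metrizable case, and nothing in your sketch bridges the gap to general metrizable $X$. The paper's bridge is item (iv): if $Y$ is the completion of $X$, then each $Y\setminus\{y\}$ with $y\in Y\setminus X$ is open in $Y$, hence completely metrizable, hence a $\kappa$-space by Proposition \ref{p:com-met-Cp}, and $X=\bigcap_{y\in Y\setminus X}\bigl(Y\setminus\{y\}\bigr)$ is then a $\kappa$-space by (iv). Without this (or a substitute), (viii) — and with it (ix) — is unproved. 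The second gap is in (vii): you invoke ``the known fact that $C_p(Y)$ is always a $\mu$-space,'' which is not a fact (the paper itself has to work to verify that a particular $C_p(X)$ is a $\mu$-space in Example \ref{exa:Cp-mu-non-kappa}). What is true, and what the paper uses, is that $C_p(Y)$ is a $\mu$-space whenever $Y$ is a $k_\IR$-space, because such a $Y$ is strongly functionally generated by its compact subsets (Arhangel'skii, Theorem III.4.15); since a $\kappa$-space is by definition closed in such a $C_p(Y)$, that restricted statement suffices.

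Two smaller remarks. In (v) the closing step ``$C_p(Z)\times C_p(K)=C_p(Z\oplus K)$'' is beside the point: $\beta X$ is compact but not of the form $C_p(K)$. The clean finish is that $f{\bigtriangleup}\, i:X\to Y\times\beta X$ (with $i:X\to\beta X$ the embedding) is a perfect embedding, hence has closed image, and $Y\times\beta X$ is a $\kappa$-space by (i) and (iii), so (ii) applies — no identification of the product with a single $C_p$-space is needed. Finally, your direct proof of (xi) via the restriction diagonal $\bigtriangleup_{K\in\KK(X)}\pi_K$ is correct, and the $k_\IR$ hypothesis is indeed used exactly where you say; but the paper gets (xi) for free: $C_p(X)$ is trivially a closed subspace of $C_p(X)$, and $X$ is a $k_\IR$-space, so characterization (vi) of Theorem \ref{t:k-Cp-spaces} applies at once. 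Your argument in effect reproves the Del'gadil'o embedding that underlies the implication (vi)$\Rightarrow$(i) of that theorem, so it is not wasted work, just redundant given what is already established.
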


\begin{proof}
(i) follows from (i) of Theorem \ref{t:k-Cp-spaces}, and (ii) is evident.
\smallskip

(iii) Since $\IR=C_p(Y)$, where $Y$ is a singleton, $\IR$ is a $\kappa$-space. Now the assertion follows from Theorem 3.11.3 of \cite{Eng} and the clauses (i) and (ii).
\smallskip

(iv) Let $\mathcal{M}=\{Y_i\}_{i\in I}$ be a family of $\kappa$-subspaces of $X$. Then the diagonal
\[
\bigtriangleup := \big\{ (x_i)_i\in X^I: x_i=x_j \mbox{ for } i\in I\big\}
\]
is closed in $X^I$. Observe that $\bigcap \mathcal{M}$ is homeomorphic to the intersection $Y:= {\bigtriangleup} \cap \prod_{i\in I} Y_i$. Since $Y$ is a closed subspace of $\prod_{i\in I} Y_i$, (i) and (ii) imply that $\bigcap \mathcal{M}\cong Y$ is a $\kappa$-space.
\smallskip

(v) Let $i:X\to \beta X$ be an embedding. Then the diagonal mapping $f{\bigtriangleup}i:X\to Y\times\beta X$ is an embedding (Theorem~2.3.20 of \cite{Eng}) and a perfect mapping (Theorem~3.7.9 of \cite{Eng}). It follows that $X$ is homeomorphic to a closed subspace of $Y\times\beta X$.
By (iii), $\beta X$ is a $\kappa$-space. Therefore, by (i) and (ii), $X$ is a $\kappa$-space.
\smallskip

(vi) follows from the fact that $X$ is closely embedded into $C_p(C_p(X))$.
\smallskip

(vii) Let $X$ be closely embedded into $C_p(Y)$ for some $k_\IR$-space $Y$. Since each $k_\IR$-space is strongly functionally generated by its family of all compact subsets, Theorem III.4.15 of \cite{Arhangel} implies that $C_p(Y)$ is a $\mu$-space. Being a closed subspace of the $\mu$-space $C_p(Y)$,  $X$ is also a $\mu$-space.
\smallskip

(viii) Let $X$ be a metrizable space, and let $Y$ be a completion of $X$. Note that for every $y\in Y$, the space $Y\SM \{y\}$ admits a complete metric. Therefore, by  Proposition \ref{p:com-met-Cp}, $Y\SM \{y\}$ is  a $\kappa$-space. Since $X=\bigcap_{y\in Y\SM X} Y\SM \{y\}$, (iv) implies that $X$ is a $\kappa$-space.
\smallskip

(ix) Recall that Dieudonn\'{e} spaces can be characterized as closed subspaces of products of metrizable spaces, see \cite[8.5.13(a)]{Eng}. Now the assertion follows from (i), (ii) and (viii).
\smallskip

(x) If $X$ is a pseudocompact  $\kappa$-space, then, by (vii), we have $X=\overline{X}$ is compact. Conversely, any compact space is a   $\kappa$-space by (iii).

(xi) Since $C_p(X)$ is trivially closely embedded into $C_p(X)$, it is a $\kappa$-space. \qed
\end{proof}

\begin{remark} \label{rem:Ascoli-non-kappa}{\em
One can naturally ask whether in  Theorem \ref{t:k-Cp-spaces} we can add ``$X$ is a closely embedded into $C_p(Y)$ for some Ascoli space $Y$''. The answer to this question is negative. Indeed, consider the space $X=\w_1$. By (x) of Proposition \ref{p:kk-space-permanent}, the space  $\w_1$ is not a $\kappa$-space. 
By the main result of \cite{GGKZ-2}, the space $C_p(\w_1)$ is an Ascoli space which is not a $k_\IR$-space, and $\w_1$ is closely embedded into $C_p(C_p(\w_1))$. Observe that, by \cite{GGKZ},  $C_p(\w_1)$ is a $\kappa$-Fr\'{e}chet--Urysohn space. Consequently, we cannot add to Theorem \ref{t:k-Cp-spaces} even the much stronger condition on $Y$ to be a $\kappa$-Fr\'{e}chet--Urysohn space.  Note also that the space $\w_1$ shows that the condition of being metrizable in (viii) of Proposition \ref{p:kk-space-permanent} cannot be weaken to the property of being a first countable locally compact  space.}
\end{remark}

The next characterization of realcompact spaces nicely complements (in the spirit of Problem \ref{prob:1} and Theorem \ref{t:k-Cp-spaces}) the well-known characterization of realcompact spaces mentioned in Introduction.
\begin{proposition} \label{p:cl-embed-sequential}
For a Tychonoff space $X$, the following assertions are equivalent:
\begin{enumerate}
\item[{\rm(i)}] $X$ is a realcompact space;
\item[{\rm(ii)}] $X$ is  closely embedded into $C_p(Y)$ for some discrete space;
\item[{\rm(iii)}] $X$ is  closely embedded into $C_p(Y)$ for some metrizable space;
\item[{\rm(iv)}] $X$ is  closely embedded into $C_p(Y)$ for some Fr\'{e}chet--Urysohn space $Y$;
\item[{\rm(v)}] $X$ is  closely embedded into $C_p(Y)$ for some sequential space $Y$;
\item[{\rm(vi)}] $X$ is  closely embedded into $C_p(Y)$ for some space $Y$ with countable tightness.
\end{enumerate}
\end{proposition}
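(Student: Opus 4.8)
The plan is to prove the cycle of implications. First note the easy trivialities: since every discrete space is metrizable, (ii)$\Ra$(iii); since every metrizable space is Fr\'echet--Urysohn, (iii)$\Ra$(iv); since every FU space is sequential, (iv)$\Ra$(v); and since every sequential space has countable tightness, (v)$\Ra$(vi). So the only real content is (vi)$\Ra$(i) and (i)$\Ra$(ii).

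For (i)$\Ra$(ii), recall the classical fact (mentioned in the Introduction) that a realcompact space $X$ is closely embedded into $\IR^D=C_p(D)$ for some discrete space $D$; concretely, take $D=C(X)$ (or a dense subfamily), and the evaluation map $x\mapsto (f(x))_{f\in D}$ realizes $X$ as a closed subspace of $\IR^D$ precisely because $X=\upsilon X$. Since $\IR^D$ with the product topology is exactly $C_p(D)$ when $D$ carries the discrete topology, this gives (ii). This step is routine and can be cited.

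The main work is (vi)$\Ra$(i). So suppose $X$ is closely embedded into $C_p(Y)$ where $t(Y)=\aleph_0$. By Proposition \ref{p:kk-space-permanent}(ii) it suffices to show $C_p(Y)$ is realcompact, equivalently (by Okunev's Theorem \ref{t:Okunev}, which identifies $\upsilon C_p(Y)$ with the strictly $\w$-continuous functions in $\IR^Y$) that every strictly $\w$-continuous real-valued function on $Y$ is already continuous. Here is where countable tightness of $Y$ enters: if $f:Y\to\IR$ is strictly $\w$-continuous but discontinuous at some point $y_0$, then $y_0$ is a cluster point of some set $A$ on which $f$ fails to have limit $f(y_0)$; by countable tightness there is a countable $A_0\subseteq A$ with $y_0\in\overline{A_0}$. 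Apply strict $\w$-continuity to $A_0\cup\{y_0\}$ to get a continuous $g$ agreeing with $f$ there, and derive a contradiction with the discontinuity of $f$ along $A_0$ at $y_0$. Hence $C_p(Y)$ is realcompact, so its closed subspace $X$ is realcompact (a closed subspace of a realcompact space is realcompact). The one subtlety to get right is the exact way strict $\w$-continuity interacts with the witness of discontinuity — one must choose the countable set inside the "bad" set so that the continuous function $g$ forced to agree with $f$ on it is itself discontinuous at $y_0$, contradicting $g\in C(Y)$. I expect this bookkeeping (and being careful that $t(Y)=\aleph_0$ is applied to the right set) to be the only genuinely delicate point; everything else is citation of standard facts and the already-proved permanence properties.
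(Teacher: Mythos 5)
Your proposal is correct and follows essentially the same route as the paper: the trivial chain (ii)$\Ra\cdots\Ra$(vi), the classical characterization for (i)$\LRa$(ii), and for (vi)$\Ra$(i) the deduction that countable tightness of $Y$ forces $t_R(Y)=\aleph_0$, hence $C_p(Y)$ is realcompact, hence so is its closed subspace $X$. The only difference is that where the paper simply cites Theorem II.4.7 and Corollary II.4.17 of Arhangel'skii, you unpack those two facts (the tightness argument with a countable $A_0\subseteq A$ and the appeal to Okunev's theorem), and your sketch of that unpacking is sound.
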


\begin{proof}
The equivalence (i)$\LRa$(ii) is well-known, see Theorem 3.11.3 of \cite{Eng}, and the implications (ii)$\Ra$(iii)$\Ra$(iv)$\Ra$(v)$\Ra$(vi) are trivial.
\smallskip

(vi)$\Ra$(i) By Theorem II.4.7 of \cite{Arhangel}, $t_R(Y)=\aleph_0$. Hence, by Corollary II.4.17 of \cite{Arhangel}, $C_p(Y)$ is realcompact. Being a closed subspace of $C_p(Y)$, the space $X$ is also realcompact.\qed
\end{proof}

In some important case we can reverse (iii) of Proposition \ref{p:kk-space-permanent}.

\begin{theorem} \label{t:kappa-space-realcompact}
Let $X$ be a Tychonoff space such that each compact subset of $C_p(X)$ has countable tightness. Then $X$ is a $\kappa$-space if, and only if, it is realcompact.
\end{theorem}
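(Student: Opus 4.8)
The plan is to prove the nontrivial direction: if each compact subset of $C_p(X)$ has countable tightness and $X$ is a $\kappa$-space, then $X$ is realcompact; the converse is already (iii) of Proposition \ref{p:kk-space-permanent}. By Theorem \ref{t:k-Cp-spaces}(vii), being a $\kappa$-space means that the diagonal map
\[
\psi:=\bigtriangleup_{K\in\KK(C_p(X))}\phi_K: X\to \prod_{K\in\KK(C_p(X))} C_p(K)
\]
is an embedding with closed image. So the idea is to show that, under the countable-tightness hypothesis on the compact subsets $K$ of $C_p(X)$, each factor $C_p(K)$ is realcompact; then the product is realcompact, a closed subspace of a realcompact space is realcompact, and we are done.

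First I would record why each $C_p(K)$ is realcompact. Let $K\in\KK(C_p(X))$. Then $K$ is a compact space of countable tightness by hypothesis. A compact space is a $k$-space, hence a $k_\IR$-space, hence (as noted in the excerpt) strongly functionally generated by its family of compact subsets --- but more directly, a compact space of countable tightness has $R$-tightness $t_R(K)=\aleph_0$. Indeed, by Asanov's theorem (or directly: for compact $K$, $t(K)=\aleph_0$ gives $t_R(K)=\aleph_0$, via Theorem II.4.7 of \cite{Arhangel} applied as in the proof of Proposition \ref{p:cl-embed-sequential}(vi)$\Ra$(i)). Then Corollary II.4.17 of \cite{Arhangel} gives that $C_p(K)$ is realcompact. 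This is exactly the argument already used in the proof of Proposition \ref{p:cl-embed-sequential}, now applied factor by factor.

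Assembling: the product $\prod_{K\in\KK(C_p(X))} C_p(K)$ of realcompact spaces is realcompact by Theorem 3.11.3 of \cite{Eng}; since $X$ is a $\kappa$-space, $\psi$ exhibits $X$ as a closed subspace of this product; and a closed subspace of a realcompact space is realcompact (again Theorem 3.11.3 of \cite{Eng}). Hence $X$ is realcompact, which combined with Proposition \ref{p:kk-space-permanent}(iii) gives the equivalence.

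The one point requiring care --- and what I expect to be the main obstacle --- is the passage from $t(K)=\aleph_0$ for compact $K$ to $t_R(K)=\aleph_0$, and then to realcompactness of $C_p(K)$: one must make sure the cited results (Theorem II.4.7 and Corollary II.4.17 of \cite{Arhangel}) apply in the form needed. In the proof of Proposition \ref{p:cl-embed-sequential} the hypothesis was countable tightness of $Y$ itself with no compactness, and the conclusion was $t_R(Y)=\aleph_0$ then realcompactness of $C_p(Y)$; here $Y=K$ is in addition compact, which only helps. So the argument is essentially a verbatim reuse of that computation applied to each $K\in\KK(C_p(X))$, and no genuinely new ingredient beyond Theorem \ref{t:k-Cp-spaces}(vii) is needed. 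A remark worth adding is that the hypothesis is satisfied whenever $C_p(X)$ itself has countable tightness, e.g.\ when $X$ is a Lindel\"of $\Sigma$-space or, more generally, when all finite powers of $X$ are Lindel\"of.
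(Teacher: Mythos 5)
Your proposal is correct and follows essentially the same route as the paper: Theorem \ref{t:k-Cp-spaces}(vii) gives the closed diagonal embedding into $\prod_K C_p(K)$, countable tightness of each $K$ yields $t_R(K)=\aleph_0$ and hence realcompactness of each $C_p(K)$ via Corollary II.4.17 of \cite{Arhangel}, and stability of realcompactness under products and closed subspaces finishes the argument. The only differences are in which auxiliary results of \cite{Arhangel} and \cite{Eng} are cited, which is immaterial.
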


\begin{proof}
The sufficiency follows from (iii) of Proposition \ref{p:kk-space-permanent}. To prove the necessity, assume that  $X$ is a $\kappa$-space. Then, by Theorem \ref{t:k-Cp-spaces}, the diagonal mapping
\[
\psi:=\bigtriangleup_{K\in\KK\big(C_p(X)\big)} \phi_K: X\to \prod_{K\in\KK\big(C_p(X)\big)} C_p(K)
\]
is an embedding with closed image, where $\phi_K:X\to C_p(K)$ is defined by $\phi_K(x)(s):=s(x)$ $(s\in K)$.

Let $K$ be a compact subset of $C_p(X)$. By assumption $K$ has countable tightness.
It follows from Propositions II.4.2 and II.4.6 of \cite{Arhangel} that any space with countable tightness has countable $R$-tightness.
Hence, by Corollary II.4.17 of \cite{Arhangel},
$C_p(K)$ is realcompact. Since $\psi$ is an embedding with closed image, Theorems 3.11.4 and 3.11.5 of \cite{Eng} imply that $X$ is realcompact.\qed
\end{proof}

\begin{corollary} \label{c:kappa-space-realcompact-1}
If $X$ is a separable $\kappa$-space, then $X$ is realcompact.
\end{corollary}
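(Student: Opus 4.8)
The plan is to deduce the corollary directly from Theorem \ref{t:kappa-space-realcompact}: since $X$ is a $\kappa$-space by hypothesis, it is enough to check that every $K\in\KK\big(C_p(X)\big)$ has countable tightness, and in fact I will show each such $K$ is compact metrizable. The key is that separability of $X$ forces $C_p(X)$ to carry a weaker separable metrizable topology.

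First I would fix a countable dense subset $D\subseteq X$ and consider the restriction map
\[
\pi_D\colon C_p(X)\to \IR^D,\qquad \pi_D(f):=f{\restriction}_D.
\]
This map is continuous (restriction maps are always continuous in the pointwise topology), and it is injective, because two continuous real-valued functions on $X$ that agree on the dense set $D$ coincide. Since $D$ is countable, $\IR^D$ is separable and metrizable.

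Next, take an arbitrary $K\in\KK\big(C_p(X)\big)$. The restriction $\pi_D{\restriction}_K\colon K\to \pi_D(K)$ is a continuous bijection from the compact space $K$ onto the Hausdorff space $\pi_D(K)\subseteq\IR^D$, hence it is a homeomorphism. Therefore $K$ is homeomorphic to the compact subspace $\pi_D(K)$ of the metrizable space $\IR^D$, so $K$ is compact metrizable. In particular $K$ is first countable and thus has countable tightness. Consequently $X$ satisfies the hypothesis of Theorem \ref{t:kappa-space-realcompact}, which yields that $X$ is realcompact.

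There is no real obstacle here; the single point that needs a moment's care is the passage from the weaker topology on $C_p(X)$ back to the subspace $K$ itself, which is exactly the standard fact that a continuous bijection of a compact space onto a Hausdorff space is a homeomorphism. Everything else is a routine application of the already-established Theorem \ref{t:kappa-space-realcompact}.
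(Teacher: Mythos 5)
Your proof is correct and follows essentially the same route as the paper: restrict to a countable dense set $D$, note the restriction map is continuous and injective into the metrizable space $\IR^D$ (the paper uses $C_p(D)$, which is the same thing up to the image), conclude each compact subset of $C_p(X)$ is metrizable and hence countably tight, and invoke Theorem \ref{t:kappa-space-realcompact}. No differences worth noting.
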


\begin{proof}
Let $D$ be a countable dense subspace of $X$. Then the restriction map $R:C_p(X)\to C_p(D)$ is injective. Therefore, for every $K\in\KK\big(C_p(X)\big)$, $R{\restriction}_K$ is a homeomorphism. 
Since $C_p(D)$ is metrizable, it follows that each compact subset $K$ of $C_p(X)$ is metrizable, and hence $t(K)=\aleph_0$. Now Theorem \ref{t:kappa-space-realcompact} applies.\qed
\end{proof}

\begin{corollary} \label{c:kappa-space-realcompact-2}
Assume that a $\kappa$-space $X$ contains a dense subspace $Y$ such that $Y^n$ is Lindel\"{o}f for every $n\geq 1$. Then $X$ is realcompact.
\end{corollary}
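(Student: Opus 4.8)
The plan is to reduce the statement to Theorem~\ref{t:kappa-space-realcompact}, so that it suffices to verify that every compact subset of $C_p(X)$ has countable tightness. First I would exploit the density of $Y$ in $X$: any two continuous real-valued functions on $X$ that agree on $Y$ must coincide, so the restriction map $R\colon C_p(X)\to C_p(Y)$ is a continuous injection. Consequently, for each $K\in\KK\big(C_p(X)\big)$ the map $R{\restriction}_K$ is a continuous bijection of the compact space $K$ onto $R(K)\subseteq C_p(Y)$, and hence a homeomorphism; in particular $t(K)=t\big(R(K)\big)$.

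The second ingredient is the Arhangel'skii--Pytkeev theorem: for a Tychonoff space $Y$ one has $t\big(C_p(Y)\big)=\sup_{n\geq 1}\ell(Y^n)$, where $\ell$ denotes the Lindel\"{o}f number (see \cite{Arhangel}). Since by hypothesis $Y^n$ is Lindel\"{o}f for every $n\geq 1$, this yields $t\big(C_p(Y)\big)=\aleph_0$. As tightness is monotone with respect to passing to subspaces ($t(A)\leq t(Z)$ whenever $A\subseteq Z$), we conclude that $t\big(R(K)\big)\leq\aleph_0$, and therefore $t(K)\leq\aleph_0$ for every $K\in\KK\big(C_p(X)\big)$. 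Now Theorem~\ref{t:kappa-space-realcompact} applies directly and shows that $X$ is realcompact.

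The only non-routine point is invoking the correct form of the Arhangel'skii--Pytkeev theorem; the remainder is bookkeeping. The two minor facts that merit an explicit check are that the density of $Y$ forces $R$ to be injective (immediate from continuity) and that a continuous bijection from a compact space onto a Hausdorff space is a homeomorphism, which is what upgrades $R{\restriction}_K$ so that $K$ and $R(K)$ share the same tightness. Note also that this argument parallels the proof of Corollary~\ref{c:kappa-space-realcompact-1}, which is just the special case $Y$ countable, where $C_p(Y)$ is even metrizable.
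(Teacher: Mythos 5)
Your proof is correct and follows essentially the same route as the paper's: both reduce to Theorem~\ref{t:kappa-space-realcompact} by using the Arhangel'skii--Pytkeev theorem to get countable tightness of $C_p(Y)$ and the injectivity of the restriction map $R:C_p(X)\to C_p(Y)$ on the dense subspace $Y$ to transfer countable tightness to every compact subset of $C_p(X)$. Your write-up merely makes explicit the step (left implicit in the paper) that $R{\restriction}_K$ is a homeomorphism onto its image because it is a continuous bijection from a compact space to a Hausdorff space.
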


\begin{proof}
By Theorem II.1.1 of \cite{Arhangel}, $C_p(Y)$ has countable tightness. Since the restriction map $R:C_p(X)\to C_p(Y)$ is injective, it follows that each compact subset $K$ of $C_p(X)$ has countable tightness. Thus, by Theorem \ref{t:kappa-space-realcompact}, the space $X$ is realcompact.\qed
\end{proof}

\begin{corollary} \label{c:kappa-space-realcompact-3}
Under PFA, if a $\kappa$-space $X$ contains a dense Lindel\"{o}f subspace $Y$, then $X$ is realcompact.
\end{corollary}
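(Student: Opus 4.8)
The plan is to derive the conclusion from Theorem~\ref{t:kappa-space-realcompact}, exactly in the spirit of Corollaries~\ref{c:kappa-space-realcompact-1} and \ref{c:kappa-space-realcompact-2}: since $X$ is already assumed to be a $\kappa$-space, it suffices to verify the hypothesis of that theorem, namely that every $K\in\KK\big(C_p(X)\big)$ has countable tightness. As $Y$ is dense in $X$, a continuous real-valued function on $X$ is determined by its values on $Y$, so the restriction map $R\colon C_p(X)\to C_p(Y)$ is a continuous injection. Hence for each such $K$ the map $R{\restriction}_K\colon K\to R(K)\subseteq C_p(Y)$ is a continuous bijection from a compact space onto a Hausdorff space, and therefore a homeomorphism; thus $K$ is homeomorphic to a compact subspace of $C_p(Y)$. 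The whole statement therefore reduces to the assertion that, under PFA, if $Y$ is Lindel\"{o}f then every compact subspace of $C_p(Y)$ has countable tightness.

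This reduced assertion is the one place where set theory is used, and it is a known consequence of PFA. The mechanism is that, by Arhangel'skii's theorem, a compact $K$ with $t(K)>\aleph_0$ would contain a free sequence of length $\w_1$; pushing such a sequence forward through the evaluation map $y\mapsto(f\mapsto f(y))$ of $Y$ into $C_p(K)$ and using that $Y$ is Lindel\"{o}f, one is led to a configuration that PFA excludes. Granting this, every $K\in\KK\big(C_p(X)\big)$ has countable tightness, and Theorem~\ref{t:kappa-space-realcompact} yields that the $\kappa$-space $X$ is realcompact.

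The reason one cannot simply invoke Corollary~\ref{c:kappa-space-realcompact-2} in ZFC --- and hence where the real difficulty lies --- is that a Lindel\"{o}f space need not have all finite powers Lindel\"{o}f (the Sorgenfrey line being the standard example), so $t\big(C_p(Y)\big)$ can be uncountable while only the compact subsets of $C_p(Y)$ remain controlled, and the latter only under an additional axiom. Thus the main obstacle is packed entirely into the cited PFA result on compact subspaces of $C_p$ of a Lindel\"{o}f space; the remainder is the purely formal reduction above together with Theorem~\ref{t:kappa-space-realcompact}.
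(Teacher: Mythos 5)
Your proof is correct and follows essentially the same route as the paper: the paper likewise uses the injective restriction map $R\colon C_p(X)\to C_p(Y)$ to transfer countable tightness of compact subsets from $C_p(Y)$ to $C_p(X)$, citing the PFA result that every compact subspace of $C_p$ of a Lindel\"{o}f space has countable tightness (Theorem IV.11.14 of \cite{Arhangel}), and then applies Theorem~\ref{t:kappa-space-realcompact}. Your heuristic sketch of the free-sequence mechanism behind that PFA result is not needed, since both you and the paper treat it as a cited black box.
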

\begin{proof}
By Theorem IV.11.14 of \cite{Arhangel}, any compact subspace of $C_p(Y)$ has countable tightness. Since the restriction map $R:C_p(X)\to C_p(Y)$ is injective, it follows that each compact subset $K$ of $C_p(X)$ has countable tightness. Thus, by Theorem \ref{t:kappa-space-realcompact}, the space $X$ is realcompact.\qed
\end{proof}

\begin{problem} \label{prob:kappa-space-realcompact-3}
Is it possible to prove in $\mathrm{ZFC}$ that if a $\kappa$-space $X$ contains a dense Lindel\"{o}f subspace $Y$, then $X$ is realcompact?
\end{problem}

Note that Problem \ref{prob:kappa-space-realcompact-3} is equivalent to the following question.

\begin{problem} \label{prob:kappa-space-realcompact-3+}
Is it possible to prove in $\mathrm{ZFC}$ that if $X$  is a compact space and $Y\subseteq C_p(X)$ is Lindel\"{o}f, then $\cl Y$ is realcompact?
\end{problem}

\begin{example} \label{exa:Cp-kappa-non-realcompact}
Let $X=\w_1+1$. Then $C_p(X)$ is a Fr\'{e}chet--Urysohn $\kappa$-space, which is not a realcompact space.
\end{example}

\begin{proof}
Since $X$ is compact,  (xi) of Proposition \ref{p:kk-space-permanent} implies that the space $Y=C_p(X)$ is a $\kappa
$-space. By Corollary II.4.17 of \cite{Arhangel}, $C_p(Z)$ is realcompact if, and only if, the $R$-tightness $t_R(Z)$ of $Z$ is $\aleph_0$. To prove that $C_p(X)$ is not realcompact it remains to show that $t_R(X)\not=\aleph_0$. To this end, consider the function $f:X\to \IR$ defined by $f(\lambda):=0$ for $\lambda<\omega_1$ and $f(\omega_1)=1$. It is easy to see that $f$ is strictly $\w$-continuous but not continuous. Thus $t_R(X)\not=\aleph_0$.
Since $X$ is a scattered compact space, Theorem II.7.16 of  \cite{Arhangel} implies that $C_p(X)$ is a Fr\'{e}chet--Urysohn space.\qed
\end{proof}

\begin{example} \label{exa:Cp-mu-non-kappa}
Let
\[
S=\big\{ f\in [0,1]^{\w_2}: |\{\alpha<\w_2: f(\alpha)\neq 0\}|\leq\w\big\}
\]
be a $\Sigma$-product in $[0,1]^{\w_2}$, $\mathbf{1}$ be the unit constant function on $\w_2$,  and let $X=S\cup\{\mathbf{1}\}$. Then:
\begin{enumerate}
\item[{\rm(i)}] $X$ is a normal sequentially compact space,
\item[{\rm(ii)}] $C_p(X)$ is a $\mu$-space, which is not a $\kappa$-space.
\end{enumerate}
\end{example}

\begin{proof}
Since any countable subset of $S$ sits in some metrizable compact space $[0,1]^\lambda\times \{\mathbf{0}_{\w_2\setminus \lambda}\}\subseteq S$, where $\mathbf{0}_{\w_2\setminus \lambda}$ is the zero function on $\w_2\setminus \lambda$  for some countable set $\lambda\subseteq \w_2$, $S$ is a sequentially compact space. Therefore $X$ is sequentially compact as well. Thus, by the Asanov--Velichko Theorem \cite[III.4.1]{Arhangel}, $C_p(X)$ is a $\mu$-space.

\begin{claim}
$X$ is a normal space.
\end{claim}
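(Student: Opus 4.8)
The plan is to separate an arbitrary pair of disjoint closed sets $A,B\subseteq X$ by reducing to the normality of the $\Sigma$-product $S$ and then accommodating the single extra point $\mathbf{1}$. The one genuinely nontrivial ingredient is the classical fact that $S$ itself is normal (every $\Sigma$-product of metrizable spaces --- here of copies of $[0,1]$ --- is normal), which I would quote rather than reprove. I would also record at the outset that $\{\mathbf{1}\}$ is closed in the Hausdorff space $X$, so $S=X\setminus\{\mathbf{1}\}$ is \emph{open} in $X$; hence every subset of $S$ that is open in $S$ is automatically open in $X$, and this is the gluing device used throughout.

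If $\mathbf{1}\notin A\cup B$, then $A,B\subseteq S$ and both are closed in $S$, so normality of $S$ produces disjoint $S$-open (hence $X$-open) neighbourhoods and we are done. Thus I may assume (after possibly swapping $A$ and $B$) that $\mathbf{1}\in B$, hence $\mathbf{1}\notin A$. The crucial observation is that a basic neighbourhood of $\mathbf{1}$ in $X$ constrains only finitely many coordinates: since $X\setminus A$ is an open neighbourhood of $\mathbf{1}$, there are a finite set $F\subseteq\w_2$ and $\e\in(0,1)$ with $\{f\in X:f(\alpha)>1-\e\text{ for all }\alpha\in F\}\cap A=\es$, equivalently $A\subseteq G:=\{f\in S:f(\alpha)<1-\e/2\text{ for some }\alpha\in F\}$, and $G$ is open in $S$. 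On the other side, $B_0:=B\cap S$ is closed in $S$ and disjoint from $A$.

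Next I would apply normality of $S$ to the disjoint closed sets $A$ and $B_0$, obtaining disjoint $S$-open sets $U_A\supseteq A$ and $U_B\supseteq B_0$, and then set
\[
U:=U_A\cap G,\qquad V_{\mathbf{1}}:=\{f\in X: f(\alpha)>1-\e/2\text{ for all }\alpha\in F\},\qquad V:=U_B\cup V_{\mathbf{1}}.
\]
Here $U$ is open in $S$, hence in $X$, and contains $A$ because $A\subseteq U_A$ and $A\subseteq G$; $V_{\mathbf{1}}$ is an $X$-open neighbourhood of $\mathbf{1}$; and $V$ is open in $X$ (as $U_B$ is open in $X$) and contains $B=B_0\cup\{\mathbf{1}\}$. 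Finally $U\cap U_B\subseteq U_A\cap U_B=\es$, while $U\cap V_{\mathbf{1}}=\es$ since membership in $G$ and in $V_{\mathbf{1}}$ impose contradictory conditions on the coordinates in $F$; thus $U\cap V=\es$ and $U,V$ separate $A$ and $B$.

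I expect the only delicate point to be the bookkeeping in the previous paragraph: one must choose the thresholds so that the neighbourhood of $\mathbf{1}$ and the shrunk neighbourhood $U$ of $A$ are forced apart purely by the finitely many coordinates in $F$, while $U$ still lies inside the neighbourhood $U_A$ delivered by the normality of $S$. The ``$1-\e$ versus $1-\e/2$'' buffer is exactly what reconciles these two requirements; once it is in place, the rest is routine manipulation with the fact that $S$ is open in $X$.
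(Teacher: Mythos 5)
Your argument is correct, and it takes a genuinely different route from the paper's. The paper separates $A$ and $B$ by Urysohn functions: it invokes both of Corson's theorems --- that the $\Sigma$-product $S$ is normal \emph{and} that $S$ is $C^\ast$-embedded in $[0,1]^{\w_2}$ --- builds a separating function on $S$, extends it continuously to the whole cube, restricts to $X$, and then handles the extra point $\mathbf{1}$ by a case analysis (whether $\mathbf{1}$ lies in the closure of $B\cap S$, with a multiplicative correction by a second function in the remaining subcase). You instead use only the normality of $S$, together with the elementary observation that a basic neighbourhood of $\mathbf{1}$ in the product topology constrains just finitely many coordinates; the ``$1-\e$ versus $1-\e/2$'' buffer then forces your shrunk neighbourhood $U=U_A\cap G$ of $A$ away from the cylinder $V_{\mathbf{1}}$ around $\mathbf{1}$, while normality of $S$ (applied to $A$ and $B\cap S$) keeps $U$ away from the rest of $B$; openness of $S$ in $X$ glues everything together. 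This is more self-contained, since it avoids the $C^\ast$-embedding theorem entirely and needs no case analysis; what the paper's route buys is a single continuous function witnessing complete separation, which is not actually needed for the claim. One cosmetic point: your word ``equivalently'' when passing from $\{f\in X:f(\alpha)>1-\e\ \forall\alpha\in F\}\cap A=\es$ to $A\subseteq G$ is an overstatement --- the correct direction is an implication (the complement condition $f(\alpha)\le 1-\e$ for some $\alpha\in F$ implies $f(\alpha)<1-\e/2$ for that $\alpha$, and $A\subseteq S$) --- but that is exactly the direction you use, so nothing breaks.
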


\begin{proof}
By Theorem 1 of  \cite{Corson1959}, the $\Sigma$-product $S$ of compact metrizable spaces is a normal space.
Applying Theorem 2 of  \cite{Corson1959} we obtain that $S$ is $C^\ast$-embedded in $\beta S=[0,1]^{\w_2}$ (i.e., any bounded continuous function on $S$ is extended to a continuous function on $\beta S$).

To prove that also $X$ is a normal space, let $F$ and $G$ be disjoint nonempty closed subsets of $X$. We construct a continuous function $f: X\to \IR$ such that $f(F)=\{0\}$ and $f(G)=\{1\}$.
\smallskip

{\em Case 1: Assume that $\mathbf{1}\notin F\cup G$.} Then $F,G\subseteq S$. Since $S$ is a normal space, there exists a continuous function $g: S\to \IR$ such that $g(F)=\{0\}$ and $g(G)=\{1\}$. Since $S$ is $C^\ast$-embedded in $[0,1]^{\w_2}$, the function $g$ is extended to a continuous function $\hat g: [0,1]^{\w_2}\to \IR$. It suffice to define  $f:=\hat g{\restriction}_X$.
\smallskip

{\em Case 2: Let $\mathbf{1}\in F\cup G$.} We assume that $\mathbf{1}\in F$. If $F$ or $G$ is compact, the existence of a desired function $f$ is well-known. Assume that neither $F$ nor $G$ is compact. Let $Q=F\cap S=F\setminus\{\mathbf{1}\}$.
Since $S$ is a normal space, there exists a continuous function $g: S\to \R$ such that $g(Q)=\{0\}$ and $g(G)=\{1\}$. Since $S$ is $C^\ast$-embedded in $[0,1]^{\w_2}$, the function $g$ is extended to a continuous function $\hat g: [0,1]^{\w_2}\to \IR$. Consider two subcases.
\smallskip

{\em Subcase 2.1: Assume that $\mathbf{1}\in \overline{Q}$.} Then $\hat g(\mathbf{1})=0$. Let $f=\hat g{\restriction}_X$.
\smallskip

{\em Subcase 2.2: Assume that $\mathbf{1}\notin \overline{Q}$.} Let $h: [0,1]^{\w_2}\to \IR$ be a continuous function such that $h(\bo)=0$ and $h(Q\cup G)=\{1\}$. Let $f=h\hat g{\restriction}_X$. \qed
\end{proof}

It remains to prove that $C_p(X)$ is not a $\kappa$-space. First we prove the following two claims.

\begin{claim} \label{cl:Cp-mu-non-kappa-1}
Any compact subset $K$ of $C_p(C_p(X))$ has countable tightness.
\end{claim}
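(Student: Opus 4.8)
The plan is to exploit the fact that $X$ has a very large dense subspace, namely $S$, whose countable subsets all sit inside metrizable compacta, together with Proposition \ref{p:nrest} to reduce the tightness computation to a countable-type situation. First I would recall that, since $X$ is normal (by the Claim just proved), Proposition \ref{p:nrest} applies: given a compact $K\subseteq C_p(C_p(X))$, in order to bound $t(K)$ it suffices to understand, for each countable $M\subseteq K$, a closed copy of $\overline M$ inside $C_p(C_p(\cl L))$ for some countable $L\subseteq X$. The key point will be that $\cl L$, the closure in $X$ of a countable subset $L\subseteq X$, is a compact metrizable space: indeed a countable $L\subseteq X=S\cup\{\mathbf 1\}$ meets $S$ in a countable set, which lives in some $[0,1]^\lambda\times\{\mathbf 0_{\w_2\setminus\lambda}\}$ with $\lambda\subseteq\w_2$ countable, and adjoining $\mathbf 1$ changes nothing about metrizability of the closure since the closure is taken inside $X$ and is contained in the separable metrizable space $\big([0,1]^\lambda\times\{\mathbf 0\}\big)\cup\{\mathbf 1\}$ (or, if $\mathbf 1$ is not a limit of $L$, simply in that metrizable compactum together with the isolated-in-$\cl L$ point $\mathbf 1$).

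Next I would use that for a compact metrizable space $Z$, the iterated function space $C_p(C_p(Z))$ has countable tightness: $C_p(Z)$ is a separable metrizable space (hence has a countable network), so by Asanov's theorem (Theorem I.4.1 of \cite{Arhangel}, or Theorem II.1.1 of \cite{Arhangel} applied suitably) $t\big(C_p(C_p(Z))\big)=\aleph_0$; more directly, $C_p(C_p(Z))$ is contained in $C_p$ of a cosmic space, whose tightness is countable. Applying this with $Z=\cl L$, every countable $M\subseteq K$ has $\overline M$ (closure in $K$, equivalently in $C_p(C_p(X))$, equivalently via the closed embedding in $C_p(C_p(\cl L))$) lying in a space of countable tightness, hence $\overline M$ itself has countable tightness.

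Finally I would assemble these pieces: let $a\in K$ be a cluster point of a set $A\subseteq K$. I want a countable $A_0\subseteq A$ with $a\in\overline{A_0}$. The standard closing-off argument works — build an increasing chain of countable sets $A_0\subseteq A_1\subseteq\cdots\subseteq A$ and corresponding countable $L_n\subseteq X$ with $A_n\cup\{a\}\subseteq\overline M_n$ closely embedded in $C_p(C_p(\cl{L_n}))$, using countable tightness there at each stage to ensure $a$ clusters against $A_{n+1}$ inside the relevant metrizable-base picture; then $A_\omega=\bigcup_n A_n$ and $L_\omega=\bigcup_n L_n$ are countable, $\cl{L_\omega}$ is again compact metrizable, and countable tightness of $C_p(C_p(\cl{L_\omega}))$ gives $a\in\overline{A_\omega}$. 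Alternatively, and more cleanly, one can observe that any single countable $A_0\subseteq A$ with $a\in\overline{A_0}$ exists already after one application of Proposition \ref{p:nrest} to the countable set needed, because countable tightness of $C_p(C_p(\cl L))$ produces the countable subset in one step. I expect the main obstacle to be the bookkeeping in verifying that the closed embedding furnished by Proposition \ref{p:nrest} is compatible with taking cluster points — i.e. that clustering in $K\subseteq C_p(C_p(X))$ corresponds to clustering in the image inside $C_p(C_p(\cl L))$ — but this is automatic since a closed embedding is in particular a topological embedding and $\overline M$ is already closed; the genuinely substantive input is just the metrizability of $\cl L$ for countable $L$, which is exactly where the $\Sigma$-product structure of $S$ is used.
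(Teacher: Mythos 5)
There is a genuine gap in the final assembly. Your preliminary observations are all correct: $X$ is normal, so Proposition \ref{p:nrest} applies; for \emph{countable} $L\subseteq X$ the closure $\cl{L}$ is indeed a metrizable compactum (the $\Sigma$-product structure puts $L\cap S$ inside some $[0,1]^\lambda\times\{\mathbf 0\}$ with $\lambda$ countable, and $\mathbf 1$ is at worst an isolated point of $\cl{L}$); and $C_p(C_p(Z))$ is cosmic, hence countably tight, for compact metrizable $Z$. What fails is the inference from ``the closure of every countable subset of $K$ has countable tightness'' to ``$K$ has countable tightness''. That implication is false in general: in $\w_1+1$ every countable subset has a countable (hence compact metrizable) closure, yet $t(\w_1+1)=\w_1$. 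Your closing-off argument does not repair this, because at each stage the countable tightness of $C_p(C_p(\cl{L_n}))$ only gives information about cluster points of subsets of $\overline{M_n}$, whereas the hypothesis $a\in\overline{A}$ lives in $K$ and the (typically uncountable) set $A$ is never transported into $C_p(C_p(\cl{L_n}))$; nothing forces $a\in\overline{A_\omega}$ at the end. (In the $\w_1+1$ picture the chain $A_n$ just produces ever larger bounded countable sets, none of which captures the point $\w_1$.) The same objection applies to your ``cleaner'' one-step variant.

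The paper instead argues by contradiction through the free-sequence characterization of tightness in compact spaces (Theorem 7.12 of \cite{Hodel1984}): if $t(K)>\aleph_0$, there is a free sequence $M=\{f_\alpha:\alpha<\w_1\}$ whose closure $T$ is a compact space of uncountable tightness, and Proposition \ref{p:nrest} is applied to this set of size $\w_1$, producing $L$ with $|L|\leq\w_1$. The resulting $P=\cl{L}$ is then no longer metrizable, which is precisely where the substantive work begins: one shows $P$ is homeomorphic to a closed subspace of $S$, is a Sokolov space with all finite powers Fr\'echet--Urysohn, so that $C_p(C_p(C_p(P)))$ is Lindel\"of and Asanov's theorem yields $t(C_p(C_p(P)))=\aleph_0$, contradicting $T\subseteq C_p(C_p(P))$. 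If you want to keep your reduction, you must run it at the cardinal $\w_1$ rather than $\w$, and then the metrizability of $\cl{L}$ --- the ingredient your argument leans on --- is no longer available; some replacement such as the Sokolov/Asanov machinery is needed.
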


\begin{proof}
Suppose for a contradiction that  the tightness of some compact $K\subseteq C_p(C_p(X))$ is uncountable. Then, by Theorem 7.12 of  \cite{Hodel1984}, there exists a free sequence $\{f_\alpha:\alpha<\w_1\}\subseteq K$ of length $\w_1$. Recall that a sequence $\{f_\alpha:\alpha<\w_1\}$ is called a {\em free sequence} if
\[
\overline{\{f_\alpha:\alpha<\beta\}}\cap \overline{\{f_\alpha:\beta\leq \alpha<\w_1\}}=\emptyset
\]
for any $\beta<\w_1$. Let $M:=\{f_\alpha:\alpha<\w_1\}$ and $T:=\overline{M}$. Since $T$ contains a free sequence of length $\w_1$, Theorem 7.12 of  \cite{Hodel1984} implies that $T$ is a compact space of uncountable tightness. Since $X$ is a normal space, Proposition \ref{p:nrest} implies that there exists $L\subseteq X$, $|L|\leq\w_1$, such that $T$ is embedded into $C_p(C_p(P))$, where $P=\overline{L}$.

We show that $P$ is closely embedded into the $\Sigma$-product $S$. If $\mathbf{1}\notin P$, then $P$ is a closed subset of $S$ and we are done. Assume that  $\mathbf{1}\in P$. Let $L'=L\cap S$.  Since $L'\subseteq S$ and  $|L'|\leq\w_1$, then $|I|\leq\w_1$, where
\[
I=\big\{\beta<\w_2: f(\beta)\neq 0 \, \mbox{ for some }\, f\in L'\big\}.
\]
Let $\gamma\in \w_2\setminus I$. Then $f(\gamma)=0$ for all $f\in L'$. Let $U=\{g\in X: g(\gamma)>\tfrac{1}{2}\}$. Then $U$ is an open neighbourhood of $\mathbf{1}$ and $U\cap P=\{\mathbf{1}\}$.
Hence,  $\mathbf{1}$ is an isolated point of $P$. Let $g\in S\setminus P$. Then $P'= (P\cap S)\cup \{g\}$ is closed in $S$ and homeomorphic to $P$.

By (a) and (c) of Theorem 2.1 of \cite{Tkachuk2005}, $P$ is a Sokolov space. By Theorem 2.1 of \cite{Nob}, $S$ is Fr\'{e}chet--Urysohn. Since for any natural number $n$, $S$ is homeomorphic to $S^n$ we obtain that $P^n$ is also a Fr\'{e}chet--Urysohn space. By (e) of Theorem 2.1 of \cite{Tkachuk2005}, $C_p(C_p(C_p(P)))$ is a Lindel\"{o}f space. Applying Asanov's theorem \cite[I.4.1]{Arhangel}, we obtain that the space $C_p(C_p(P))$ has countable tightness.
Thus also the compact subspace $T$ of $C_p(C_p(P))$ has countable tightness, a contradiction.\qed
\end{proof}

\begin{claim} \label{cl:Cp-mu-non-kappa-2}
$C_p(X)$ is not realcompact.
\end{claim}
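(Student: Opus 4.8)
The plan is to reduce the claim to showing that the $R$-tightness of $X$ exceeds $\aleph_0$. Indeed, exactly as in the proof of Example~\ref{exa:Cp-kappa-non-realcompact} (Corollary~II.4.17 of \cite{Arhangel}, or equivalently Okunev's Theorem~\ref{t:Okunev}), $C_p(X)$ is realcompact if, and only if, $t_R(X)=\aleph_0$, i.e. if, and only if, every strictly $\w$-continuous real-valued function on $X$ is continuous. So it suffices to exhibit a strictly $\w$-continuous $f\colon X\to\IR$ that is discontinuous.

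I would take $f(g):=0$ for $g\in S$ and $f(\bo):=1$. To see that $f$ is strictly $\w$-continuous, fix $A\subseteq X$ with $|A|\leq\w$. If $\bo\notin A$, the zero function agrees with $f$ on $A$. If $\bo\in A$, then $A\cap S$ is a countable subset of the $\Sigma$-product, so $\lambda:=\bigcup_{h\in A\cap S}\{\alpha<\w_2:h(\alpha)\neq0\}$ is countable; picking $\gamma\in\w_2\SM\lambda$, the $\gamma$-th coordinate projection $\pi_\gamma\colon X\to[0,1]\subseteq\IR$ is continuous with $\pi_\gamma(\bo)=1=f(\bo)$ and $\pi_\gamma{\restriction}_{A\cap S}\equiv0$, whence $\pi_\gamma{\restriction}_A=f{\restriction}_A$. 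Thus $f$ is strictly $\w$-continuous. On the other hand $f$ is not continuous: since $S$ is dense in $[0,1]^{\w_2}$, the point $\bo$ lies in the closure of $S$ in $X$, yet $f\equiv0$ on $S$ while $f(\bo)=1$. Hence $t_R(X)\neq\aleph_0$, and $C_p(X)$ is not realcompact.

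I do not expect a genuine obstacle here; the only step needing a moment's care is the strict $\w$-continuity of $f$, which uses precisely the defining property of the $\Sigma$-product $S$ — every countable subset is supported on a countable set of coordinates — just as in the argument given above that $S$ is sequentially compact. (This $f$ is the natural analogue of the witness to $t_R\neq\aleph_0$ used in Example~\ref{exa:Cp-kappa-non-realcompact}.)
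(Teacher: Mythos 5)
Your proposal is correct and is essentially identical to the paper's proof: both reduce to showing $t_R(X)\neq\aleph_0$ via Corollary II.4.17 of \cite{Arhangel} and use the same witness $f$ (zero on $S$, one at $\mathbf{1}$). You merely spell out the strong $\w$-continuity via a coordinate projection $\pi_\gamma$ with $\gamma$ outside the countable support of a countable set, which the paper leaves as "easy to see."
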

\begin{proof}
By Corollary II.4.17 of \cite{Arhangel}, $C_p(Z)$ is realcompact if, and only if, the $R$-tightness $t_R(Z)$ of $Z$ is $\aleph_0$. To prove that $C_p(X)$ is not realcompact it remains to show that $t_R(X)\not=\aleph_0$. To this end, consider the function $f:X\to \IR$ defined by $f(s):=0$ for $s\in S$ and $f(\mathbf{1})=1$. It is easy to see that $f$ is strictly $\w$-continuous but not continuous. Thus $t_R(X)\not=\aleph_0$.\qed
\end{proof}

By Claim \ref{cl:Cp-mu-non-kappa-1} and Theorem \ref{t:kappa-space-realcompact}, the space $C_p(X)$ is not a $\kappa$-space if, and only if, $C_p(X)$ is not realcompact. Thus, by  Claim \ref{cl:Cp-mu-non-kappa-2}, $C_p(X)$ is not a $\kappa$-space.\qed
\end{proof}


\section{Proofs of Theorem \ref{t:x-vX-kX}, Theorem \ref{t:x-vX-kX-bounded} and Theorem \ref{t:compact-Cp(KX)}} \label{sec:main}


The items (iii) and (iv)  of Proposition \ref{p:kk-space-permanent} allow us to introduce  the $\kappa$-completion of a Tychonoff space $X$ as follows.

\begin{definition} \label{def:kk-closure} {\em
For a Tychonoff space $X$, the intersection of all  $\kappa$-subspaces of $\beta X$ containing $X$ is called the {\em  $\kappa$-completion} of $X$ and is denoted by $\kappa X$.}
\end{definition}

\begin{remark} {\em
(i) By definition and (iv)  of Proposition \ref{p:kk-space-permanent}, a Tychonoff space $X$ is a $\kappa$-space if, and only if, $X=\kappa X$.

(ii) If $X$ is pseudocompact, then $\kappa X=\beta X$. Indeed, since $X$ is dense in $\kappa X$, $\kappa X$ is also pseudocompact. Hence, by (x) of Proposition \ref{p:kk-space-permanent}, $\kappa X$ is compact. As $\kappa X$ is dense in $\beta X$, it follows that $\kappa X=\beta X$.

(iii) If $X$ is not pseudocompact, then also $\kappa X$ is not pseudocompact. Indeed, if $f$ is an unbounded function on $X$, then it can be extended to an unbounded function $\bar f$ on $\upsilon X$. Since $\kappa X\subseteq \upsilon X$ by (iii) of Proposition \ref{p:kk-space-permanent}, $\bar f{\restriction}_{\kappa X}$ is also unbounded.

(iv) Let $X$ be a countably compact subspace of $\beta\NN$ such that $X\times X$ is not pseudocompact (see, for example, Example 9.15 of \cite{GiJ}). By (ii), we have $\kappa X\times \kappa X=\beta\NN\times\beta\NN$ is a compact space. On the other hand, since $X\times X$ is not pseudocompact, it follows from (iii) that $\kappa(X\times X)$ is also not pseudocompact. In particular, $\kappa(X\times X)$ and $\kappa X\times \kappa X$ are not homeomorphic.}
\end{remark}

The following inclusions immediately follows from Definition \ref{def:kk-closure} and Proposition \ref{p:kk-space-permanent}:
\begin{equation}\label{equ:inclusion}
X \subseteq \mu X \subseteq \kappa X \subseteq \DD X \subseteq \upsilon X \subseteq \beta X.
\end{equation}

Let $f:X\to Y$ be a continuous mapping between Tychonoff spaces. By Corollary 3.6.6 of \cite{Eng}, there is a unique continuous extension $\beta(f): \beta X\to \beta Y$. To describe points from $\kappa X$ given in Proposition \ref{p:description-point-kk} below, we need the next assertion.
\begin{proposition} \label{p:f-extension-kk}
Let $X$ and $Y$ be Tychonoff spaces, and let $f:X\to Y$ be a continuous mapping. Then the continuous mapping
\[
\kappa(f):=\beta(f){\restriction}_{\kappa X} : \kappa X\to \beta Y
\]
has range in $\kappa Y$, i.e. $\kappa(f)\big(\kappa X\big)\subseteq \kappa Y$. In particular, if $Y$ is a $\kappa$-space, then $\kappa(f)\big(\kappa X\big)\subseteq Y$.
\end{proposition}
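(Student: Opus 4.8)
The plan is to reduce the statement to the defining property of $\kappa X$ as the smallest $\kappa$-subspace of $\beta X$ containing $X$, together with the permanence property (v) of Proposition \ref{p:kk-space-permanent} (perfect preimages of $\kappa$-spaces are $\kappa$-spaces) and the fact (Proposition \ref{p:kk-space-permanent}(iii)) that $\beta Y$ is a $\kappa$-space. Set $g:=\beta(f):\beta X\to\beta Y$ and let $Z:=g^{-1}(\kappa Y)\cap\kappa X$. Since $\kappa(f)=g{\restriction}_{\kappa X}$, what we must show is exactly $\kappa X\subseteq g^{-1}(\kappa Y)$, i.e. $Z=\kappa X$. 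Because $\kappa X$ is the smallest $\kappa$-subspace of $\beta X$ containing $X$, it suffices to check that $Z$ contains $X$ and that $Z$ is a $\kappa$-space.

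First, $X\subseteq Z$: for $x\in X$ we have $g(x)=f(x)\in Y\subseteq\kappa Y$ by \eqref{equ:inclusion}, and $X\subseteq\kappa X$, so $x\in g^{-1}(\kappa Y)\cap\kappa X=Z$. Second, I claim $Z$ is a $\kappa$-space. Consider the restriction $h:=g{\restriction}_{\kappa X}:\kappa X\to\beta Y$; this is a continuous map from the $\kappa$-space $\kappa X$ into the compact (hence $\kappa$-) space $\beta Y$. Then $Z=h^{-1}(\kappa Y)$. Now $\kappa Y$ is a $\kappa$-space, hence by \eqref{equ:inclusion} it is realcompact-free but in any case it equals $\bigcap$ of $\kappa$-subspaces of $\beta Y$; more to the point, $\kappa Y$ is closed in $\upsilon Y$ only in special cases, so I cannot simply invoke closedness. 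Instead I use the perfect-map permanence: the restriction of $h$ to $Z$, namely $h{\restriction}_Z:Z\to\kappa Y$, need not be perfect in general, so the cleanest route is different — I should realize $Z$ as a closed subspace of a product of $\kappa$-spaces.

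Here is the argument I would actually write. The graph map $\Gamma:\kappa X\to\kappa X\times\beta Y$, $\Gamma(t)=(t,h(t))$, is a closed embedding (its image is the graph of a continuous map into a Hausdorff space). Under $\Gamma$, the set $Z=h^{-1}(\kappa Y)$ corresponds to $\Gamma(\kappa X)\cap(\kappa X\times\kappa Y)$, which is the intersection of the closed set $\Gamma(\kappa X)$ with the subspace $\kappa X\times\kappa Y$; hence $Z$ is a \emph{closed} subspace of $\kappa X\times\kappa Y$. Both $\kappa X$ and $\kappa Y$ are $\kappa$-spaces, so by Proposition \ref{p:kk-space-permanent}(i) their product is a $\kappa$-space, and by Proposition \ref{p:kk-space-permanent}(ii) the closed subspace $Z$ is a $\kappa$-space. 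Therefore $Z$ is a $\kappa$-subspace of $\beta X$ (it sits inside $\kappa X\subseteq\beta X$) containing $X$, whence $\kappa X\subseteq Z$, i.e. $h(\kappa X)\subseteq\kappa Y$, which is the assertion $\kappa(f)(\kappa X)\subseteq\kappa Y$. The final sentence, that $\kappa(f)(\kappa X)\subseteq Y$ when $Y$ is a $\kappa$-space, is immediate since then $\kappa Y=Y$ by the remark following Definition \ref{def:kk-closure}.

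The only subtle point — and the step I would watch carefully — is the identification of $Z$ as a closed subset of $\kappa X\times\kappa Y$ rather than merely of $\kappa X\times\beta Y$: one must note that $\Gamma(\kappa X)\cap(\kappa X\times\kappa Y)$ is closed \emph{in the subspace} $\kappa X\times\kappa Y$ (it is the trace on that subspace of a set closed in $\kappa X\times\beta Y$), so that Proposition \ref{p:kk-space-permanent}(ii) genuinely applies. Everything else is a routine application of the already-established permanence properties of $\kappa$-spaces.
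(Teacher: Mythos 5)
Your proof is correct, and it follows the same overall reduction as the paper's: exhibit a $\kappa$-subspace of $\beta X$ containing $X$ on which $\beta(f)$ takes values in $\kappa Y$, then invoke the minimality of $\kappa X$. Where you diverge is in how the preimage is certified to be a $\kappa$-space. The paper takes the \emph{full} preimage $Z=\beta(f)^{-1}(\kappa Y)$ inside $\beta X$; since $\beta(f)$ is perfect and the restriction of a perfect map to a full preimage is perfect (Engelking, 3.7.4), the map $Z\to\kappa Y$ is perfect and clause (v) of Proposition \ref{p:kk-space-permanent} finishes in one line. You instead intersect with $\kappa X$ first, which --- as you correctly sensed --- destroys the ``full preimage'' structure and hence the perfectness of the restriction, and you then recover the $\kappa$-property of $Z$ by hand: the graph of the continuous map $h:\kappa X\to\beta Y$ is closed in $\kappa X\times\beta Y$ because $\beta Y$ is Hausdorff, its trace on $\kappa X\times\kappa Y$ is closed there, and clauses (i) and (ii) of Proposition \ref{p:kk-space-permanent} apply. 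This is sound, and in effect you re-derive inline exactly the special case of clause (v) that is needed (the paper's own proof of (v) runs through the same diagonal/graph device). The paper's route buys brevity; yours buys independence from the perfect-map machinery. One stylistic point: the digression about $\kappa Y$ being ``realcompact-free'' and about closedness in $\upsilon Y$ is dead weight (and that phrase is not meaningful) --- delete it, since the argument you actually write never uses it. Your handling of the final assertion, via $\kappa Y=Y$ for a $\kappa$-space $Y$, matches the paper.
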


\begin{proof}
Let $Z:= \beta(f)^{-1}(\kappa Y)$. Since $\beta(f)$ is a perfect mapping, it follows from Proposition 3.7.4 of \cite{Eng} that the restriction mapping $g:=\beta(f){\restriction}_Z: Z\to \kappa Y$ is also perfect. Therefore, by (v) of Proposition \ref{p:kk-space-permanent}, $Z$ is a $\kappa$-space, too. By the definition of the $\kappa$-completion we have $\kappa X\subseteq Z$, and hence $\kappa(f)\big(\kappa X\big)\subseteq \kappa Y$.\qed
\end{proof}

Since, by (xi) of Proposition \ref{p:kk-space-permanent}, $C_p(K)$ is a $\kappa$-space for every compact $K$, Proposition \ref{p:f-extension-kk} implies the following statement.

\begin{corollary} \label{c:f-extension-kk}
Let $X$ be a Tychonoff space, and let $K$ be a compact space. Then each continuous function $f:X\to C_p(K)$ can be extended to a continuous function $\hat f:\kappa X\to C_p(K)$.
\end{corollary}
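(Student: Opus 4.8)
The plan is to apply Proposition~\ref{p:f-extension-kk} directly, since this corollary is essentially the special case $Y = C_p(K)$. First I would recall that by (xi) of Proposition~\ref{p:kk-space-permanent}, the space $C_p(K)$ is a $\kappa$-space whenever $K$ is compact: indeed $C_p(K)$ is trivially closely embedded into itself, so it satisfies condition (iv) of Theorem~\ref{t:k-Cp-spaces} with $Y = K$. Hence $Y = C_p(K)$ is a $\kappa$-space, and Proposition~\ref{p:f-extension-kk} applies.

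Given the continuous mapping $f:X\to C_p(K)$, I would invoke Corollary~3.6.6 of \cite{Eng} to obtain the unique continuous extension $\beta(f):\beta X\to \beta\big(C_p(K)\big)$, and then form $\kappa(f):=\beta(f){\restriction}_{\kappa X}:\kappa X\to \beta\big(C_p(K)\big)$. By the ``in particular'' clause of Proposition~\ref{p:f-extension-kk}, since $C_p(K)$ is a $\kappa$-space we get $\kappa(f)\big(\kappa X\big)\subseteq C_p(K)$. Setting $\hat f := \kappa(f)$, viewed as a map into $C_p(K)$, gives a continuous function $\hat f:\kappa X\to C_p(K)$. It extends $f$ because $\beta(f)$ extends $f$ and $X\subseteq \kappa X$ (recall the inclusions \eqref{equ:inclusion}), so $\hat f{\restriction}_X = \beta(f){\restriction}_X = f$.

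There is essentially no obstacle here; the only point that warrants a line of care is the identification of the corange: one must note that the topology $C_p(K)$ carries as a subspace of its Stone--\v{C}ech compactification $\beta\big(C_p(K)\big)$ coincides with its own topology (which is automatic, as $C_p(K)$ embeds as a dense subspace of $\beta\big(C_p(K)\big)$), so that $\hat f$ really is continuous as a map into $C_p(K)$ with the pointwise topology. With that remark the proof is complete.

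\begin{proof}
By (xi) of Proposition~\ref{p:kk-space-permanent}, the space $C_p(K)$ is a $\kappa$-space. Hence, by Proposition~\ref{p:f-extension-kk} applied with $Y:=C_p(K)$, the continuous mapping $\kappa(f):=\beta(f){\restriction}_{\kappa X}:\kappa X\to \beta\big(C_p(K)\big)$ has range in $C_p(K)$. Put $\hat f:=\kappa(f)$, regarded as a continuous map $\kappa X\to C_p(K)$. Since $X\subseteq\kappa X$ and $\beta(f){\restriction}_X=f$, we obtain $\hat f{\restriction}_X=f$, so $\hat f$ is the required extension.\qed
\end{proof}
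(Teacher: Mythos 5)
Your proof is correct and follows exactly the paper's route: the paper likewise derives this corollary by observing that $C_p(K)$ is a $\kappa$-space (via (xi) of Proposition~\ref{p:kk-space-permanent}) and then applying the ``in particular'' clause of Proposition~\ref{p:f-extension-kk} with $Y=C_p(K)$. Your extra remark justifying that $C_p(K)$ carries its own topology as a subspace of $\beta\big(C_p(K)\big)$ is a harmless additional check that the paper leaves implicit.
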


Recall that if $z\in \upsilon X$, then $\delta_z(f):=\bar f(z)$, where $\bar f$ is the unique extension of $f$ onto $\upsilon X$.
We need the following assertion.
\begin{proposition} \label{p:vX-bounded-compact}
Let $X$ be a Tychonoff space, and $z\in \upsilon X$. Then $\delta_z$ is bounded on each compact subset of $C_p(X)$.
\end{proposition}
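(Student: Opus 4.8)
\textbf{Proof proposal for Proposition \ref{p:vX-bounded-compact}.}

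The plan is to argue by contradiction: suppose $K$ is a compact subset of $C_p(X)$ on which $\delta_z$ is unbounded, so there are functions $f_n\in K$ with $|\bar f_n(z)|\to\infty$, where $\bar f_n$ denotes the extension of $f_n$ to $\upsilon X$. The key idea is that the evaluation map on $K$ can be packaged as a single continuous map into $C_p(K)$, and then pushed to $\kappa X$ (hence, since $\kappa X\subseteq\upsilon X$ by \eqref{equ:inclusion}, to $\upsilon X$) using the machinery already in place. Concretely, $K$ being a compact subset of $C_p(X)$, the evaluation map $e:X\to C_p(K)$, $e(x)(f):=f(x)$, is continuous (this is just the fact that the canonical map $X\to C_p(C_p(X))$ is continuous, composed with the restriction $C_p(C_p(X))\to C_p(K)$, which is continuous by Proposition 0.4.1 of \cite{Arhangel}). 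By Corollary \ref{c:f-extension-kk}, $e$ extends to a continuous map $\hat e:\kappa X\to C_p(K)$; but in fact I want the extension all the way to $\upsilon X$, so I would instead note directly that each individual coordinate $x\mapsto e(x)(f)=f(x)$ is just $f\in C(X)$, which extends continuously to $\bar f$ on $\upsilon X$, so the map $\tilde e:\upsilon X\to\IR^K$, $\tilde e(z)(f):=\bar f(z)$, is well-defined and continuous into the product; the point of the argument is to show its range actually lies in $C_p(K)$, i.e. that $f\mapsto\bar f(z)$ is continuous on $K$, or at least bounded.

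The main step is then: $\tilde e(z)$ lies in the \emph{closure} of $e(X)$ in $\IR^K$. Indeed $z\in\upsilon X\subseteq\beta X$, and $X$ is dense in $\beta X$, so there is a net $x_i\to z$ in $\beta X$ with $x_i\in X$; for each fixed $f\in K$ we have $e(x_i)(f)=f(x_i)\to\bar f(z)$ since $\bar f$ is the continuous extension of $f$ to $\upsilon X$ and $z\in\upsilon X$ — one must be slightly careful that $\bar f$ is only defined on $\upsilon X$, not all of $\beta X$, but convergence of the net can be taken inside $\upsilon X$ by restricting to a cofinal subnet lying in $\upsilon X$, or more cleanly: $z\in\upsilon X$ and $\upsilon X$ carries the subspace topology from $\beta X$ with $X$ dense in $\upsilon X$, so pick the net in $X$ converging to $z$ within $\upsilon X$. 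Hence $\tilde e(z)\in\overline{e(X)}^{\,\IR^K}$. Now $e(X)\subseteq C_p(K)$, and I claim $e(X)$ is already a subset of the compact (hence closed) set $\widehat K:=\{\phi\in C_p(K): \phi$ is continuous and $\|\phi\|$ is controlled$\}$ — more precisely, by a standard duality, the image $e(X)$ is equicontinuous on $K$ and pointwise bounded, because $K$ is compact in $C_p(X)$ means exactly that $\{x\mapsto f(x): f\in K\}$, viewed the other way, embeds $X$ into the compact-open-type evaluation; the cleanest route is: $\overline{e(X)}^{\,C_p(C_p(X))}$ composed with restriction to $K$ lands in $C_p(K)$, and since $K$ is compact this closure image is the continuous image of a subset of $C_p(C_p(X))$ whose restriction to $K$ is automatically bounded because every element of $C(C_p(X))$ — in particular every $\phi\in\overline{e(X)}$ — is bounded on the compact set $K$.

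So the structure I would write is: (1) $e:X\to C_p(K)$ is continuous; (2) the closure $\overline{e(X)}$ taken in $C_p(C_p(X))$ — equivalently, the image under $\psi:X\to C_p(C_p(X))$, whose closure is contained in $\upsilon C_p(C_p(X))$ or at least in $C(C_p(X))$ by Okunev's Theorem \ref{t:Okunev} combined with the fact that any $g$ in that closure is strictly $\omega$-continuous hence continuous on compacta — restricts to a subset of $C_p(K)$; (3) $\tilde e(z)=\delta_z{\restriction}_K$ lies in that closure because $z\in\upsilon X=\{f\in\IR^X: f$ strictly $\omega$-continuous$\}$ means $\delta_z$ agrees with some $\phi_A\in C(C_p(X))$ on each countable $A\subseteq C(X)$, in particular on a countable dense subset of the separable-in-itself... — here I would actually invoke Okunev's theorem cleanly: $z\in\upsilon X$ gives a point $\delta_z\in\upsilon C_p(C_p(X))$? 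No — rather, the natural embedding $X\hookrightarrow C_p(C_p(X))$ extends to $\upsilon X\to\upsilon C_p(C_p(X))$, and by Theorem \ref{t:Okunev} every element of $\upsilon C_p(C_p(X))$ is strictly $\omega$-continuous on $C_p(X)$, hence continuous (indeed bounded) on the compact subset $K$. Therefore $\delta_z$ is continuous, in particular bounded, on $K$. \textbf{The main obstacle} is keeping straight which closure/completion ($\beta$, $\upsilon$, or $\kappa$) each object lives in and justifying that the relevant net or extension stays inside $\upsilon X$ so that $\bar f(z)$ makes sense; the underlying analytic content — pointwise-bounded equicontinuous-type families restrict to bounded sets — is routine once the correct ambient space is pinned down via Okunev's theorem.
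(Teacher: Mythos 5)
Your final argument is, in substance, the paper's own proof: extend the canonical map $i:X\to C_p(C_p(X))$ to a continuous map $\tilde i:\upsilon X\to \upsilon C_p(C_p(X))$ (Theorem 3.11.16 of \cite{Eng}), identify $\tilde i(z)$ with $\delta_z$ (each coordinate $\mathrm{ev}_f\circ\tilde i$ and $\bar f$ are continuous extensions of $f$ over $\upsilon X$, hence coincide --- your net argument does the same job), and invoke Okunev's Theorem \ref{t:Okunev} to conclude that $\delta_z$ is strictly $\w$-continuous on $C_p(X)$, hence bounded on compacta. So the core of your proposal is correct and coincides with the paper's route.

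Two points should be corrected, though. First, the parenthetical ``hence continuous (indeed bounded) on the compact subset $K$'' overclaims: a strictly $\w$-continuous function need \emph{not} be continuous on every compact set. If it were, Proposition \ref{p:description-point-kk} would give $\kappa X=\upsilon X$ for every $X$, which fails, e.g., for $C_p(\w_1+1)$, a $\kappa$-space that is not realcompact (Example \ref{exa:Cp-kappa-non-realcompact}); it would also make Theorem \ref{t:x-vX-kX}(ii) vacuous. What is true, and all you need, is boundedness: if $f_n\in K$ with $|\delta_z(f_n)|\to\infty$, apply strict $\w$-continuity to the countable set $A=\{f_n\}_{n\in\w}$ to get a continuous $g$ on $C_p(X)$ with $g{\restriction}_A=\delta_z{\restriction}_A$; then $g$ is unbounded on the compact set $K$, a contradiction. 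Second, the middle detour claiming that $e(X)$ is ``equicontinuous and pointwise bounded'', hence contained in a compact subset of $C_p(K)$, is false whenever $K$ contains an unbounded function $f$, since then $\{e(x)(f):x\in X\}=f(X)$ is unbounded in $\IR$; fortunately you abandon that route, and it is not needed once Okunev's theorem is in play.
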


\begin{proof}
Assume that $z\in \upsilon X$. We have to show that the Dirac measure $\delta_z$ is bounded on each compact subsets of $C_p(X)$. Let $i:X\to C_p\big(C_p(X)\big)$ be the canonical embedding. Then, by Theorem 3.11.16 of \cite{Eng}, $i$ can be extended to a continuous map $\tilde i:\upsilon X \to \upsilon C_p\big(C_p(X)\big)$. Since $i$ is an embedding and the closure $Y$ of $i(X)$ in $\upsilon C_p\big(C_p(X)\big)$ is realcompact, $Y$ can be considered as a realcompact space containing $X$ as a dense subspace. Therefore, by the definition of $\upsilon X$,  $\tilde i$ is an embedding.  Since  $\tilde i$ is injective, we apply Okunev's Theorem \ref{t:Okunev} to the space $Z=C_p(X)$ to get that $\delta_z$ is strictly $\w$-continuous on $C_p(X)$. It remains to note that any strictly $\w$-continuous function is bounded on compact sets.\qed
\end{proof}

Let $X$ be a Tychonoff space, and let $n\in\w$. Define a mapping $Q_n:C_p(X)\to C_p^b(X)$ by $Q_n(f):=f\wedge n\vee(-n)$. Then $Q_n$ is continuous. Recall also that a subset $F$ of $C(X)$ is {\em uniformly bounded} if there is $C>0$ such that $|f(x)|\leq C$ for all $x\in X$ and $f\in F$. It is clear that for every $n\in\w$ and each $A\subseteq C(X)$, the set $Q_n(A)$ is uniformly bounded.
Now we describe the points of $\kappa X$.

\begin{proposition} \label{p:description-point-kk}
Let $X$ be a Tychonoff space, and let $z\in\beta X$. Then the following assertions are equivalent:
\begin{enumerate}
\item[{\rm(i)}] $z\in \kappa X$;
\item[{\rm(ii)}] for each compact space $K$, any continuous mapping $\phi:X\to C_p(K)$ can be extended to a continuous mapping $\hat\phi: X\cup\{z\} \to C_p(K)$;
\item[{\rm(iii)}] $z\in \upsilon X$ and  $\delta_z$ is $k$-continuous on $C_p(X)$;
\item[{\rm(iv)}] $z\in \upsilon X$ and  $\delta_z$ is $k$-continuous on $C_p^b(X)$;
\item[{\rm(v)}] $z\in \upsilon X$ and $\delta_z$ is $k$-continuous on every uniformly bounded compact subset of $C_p(X)$.
\end{enumerate}
\end{proposition}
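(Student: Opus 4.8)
The plan is to prove the cycle of implications $(i)\Rightarrow(ii)\Rightarrow(iii)\Rightarrow(iv)\Rightarrow(v)\Rightarrow(i)$, using the machinery already assembled. The implication $(i)\Rightarrow(ii)$ is essentially Corollary \ref{c:f-extension-kk}: if $z\in\kappa X$ then $X\cup\{z\}\subseteq\kappa X$, so any continuous $\phi:X\to C_p(K)$ extends to $\hat\phi:\kappa X\to C_p(K)$, and we restrict to $X\cup\{z\}$. For $(ii)\Rightarrow(iii)$, first note that taking $K$ a singleton gives that every $f\in C(X)$ extends continuously to $X\cup\{z\}$, hence $z\in\upsilon X$ (a point of $\beta X$ lies in $\upsilon X$ iff every continuous real function extends over it; equivalently one uses that $\delta_z$ so defined is well-defined and then Okunev's Theorem \ref{t:Okunev} together with Proposition \ref{p:vX-bounded-compact} secures $z\in\upsilon X$). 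To get $k$-continuity of $\delta_z$ on $C_p(X)$, fix $K\in\KK(C_p(X))$ and apply $(ii)$ to the evaluation map $\phi_K:X\to C_p(K)$, $\phi_K(x)(s)=s(x)$; its continuous extension $\hat\phi_K$ to $X\cup\{z\}$ evaluated at $z$ is precisely $\delta_z{\restriction}_K$ regarded as an element of $C_p(K)$, i.e. $\delta_z$ is continuous on $K$.

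The implications $(iii)\Rightarrow(iv)$ and $(iv)\Rightarrow(v)$ require a little argument because restricting to a smaller class of compact sets goes the wrong way a priori; here the maps $Q_n:C_p(X)\to C_p^b(X)$, $Q_n(f)=f\wedge n\vee(-n)$, do the job. For $(iii)\Rightarrow(iv)$: given a compact $K\subseteq C_p^b(X)$, it is also a compact subset of $C_p(X)$, so $(iii)$ applies directly — this implication is in fact trivial since $C_p^b(X)\subseteq C_p(X)$ as a subspace. For $(iv)\Rightarrow(v)$: a uniformly bounded compact $K\subseteq C_p(X)$ with bound $C$ satisfies $K\subseteq C_p^b(X)$ (indeed $K=Q_n(K)$ for $n\geq C$), hence $(iv)$ gives $k$-continuity on $K$. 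The substantive implication is $(v)\Rightarrow(i)$.

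For $(v)\Rightarrow(i)$, the strategy is to use characterization (vii) of Theorem \ref{t:k-Cp-spaces}: $\kappa X$ is exactly the closure in $\beta X$ of the image of $X$ under the diagonal of all evaluation maps $\phi_K:X\to C_p(K)$, $K\in\KK(C_p(X))$ — more precisely, a point $z\in\beta X$ lies in $\kappa X$ iff for each such $K$ the map $\phi_K$ extends continuously over $X\cup\{z\}$ (one must first argue this equivalence from Theorem \ref{t:k-Cp-spaces}(vii) and Proposition \ref{p:f-extension-kk}, or directly: if every $\phi_K$ extends to $X\cup\{z\}$ then $\psi$ extends, so $z$ lies in the closed image $\psi(\kappa X)$, forcing $z\in\kappa X$ since $\psi$ is a closed embedding). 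So it suffices to show: given $(v)$, for every compact $K\subseteq C_p(X)$ the evaluation $\phi_K$ extends continuously over $X\cup\{z\}$, the value at $z$ being the function $s\mapsto\delta_z(s)$ on $K$. Since $z\in\upsilon X$, Proposition \ref{p:vX-bounded-compact} tells us $\delta_z$ is bounded on $K$, say $|\delta_z(s)|\leq C$ for $s\in K$; but this is not yet enough, we need $\delta_z$ continuous on $K$, whereas $(v)$ only gives continuity on uniformly bounded compacta. The key reduction is: replace $K$ by $Q_n(K)$ for large $n$. Indeed $Q_n(K)$ is uniformly bounded and compact, so by $(v)$ $\delta_z$ is continuous on each $Q_n(K)$; and one checks $\delta_z\circ Q_n\to\delta_z$ uniformly on $K$ as $n\to\infty$ — this is where boundedness of $\delta_z$ on $K$ (from Proposition \ref{p:vX-bounded-compact}) is used, since for $n\geq C$ one has, writing $\delta_z$ as the limit of Dirac measures $\delta_x$ ($x\to z$ in $\upsilon X$) and using $|Q_n(f)-f|=0$ wherever $|f|\leq n$... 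Actually the cleanest route: $\delta_z$ on $C_p(X)$ is strictly $\omega$-continuous (Okunev, as in the proof of Proposition \ref{p:vX-bounded-compact}), hence for each $f\in K$ there is $g\in C(X)$ agreeing with $f$ on a prescribed countable set; combined with the continuity of $\delta_z$ on $Q_n(K)$ and a uniform-limit argument, $\delta_z{\restriction}_K$ is continuous. The main obstacle is precisely this last step — passing from $k$-continuity on uniformly bounded compacta to $k$-continuity on all compacta — and the device that makes it work is the pair of facts that the truncations $Q_n$ are continuous with uniformly bounded compact images and that $\delta_z$, being bounded on $K$ by Proposition \ref{p:vX-bounded-compact}, is recovered as a uniform limit of the continuous functions $\delta_z\circ Q_n{\restriction}_K$.
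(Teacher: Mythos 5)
Your architecture matches the paper's almost exactly: (i)$\Rightarrow$(ii) from Corollary \ref{c:f-extension-kk}; (ii)$\Rightarrow$(i) via the closed diagonal embedding of $\kappa X$ into a product $\prod_\alpha C_p(K_\alpha)$ and density of $X$ in $X\cup\{z\}$; (ii)$\Leftrightarrow$(iii) via the evaluation maps $\phi_K$ and their adjoints; (iii)$\Rightarrow$(iv)$\Rightarrow$(v) trivially; and the return from (v) via the truncations $Q_n$ together with the boundedness of $\delta_z$ on compacta from Proposition \ref{p:vX-bounded-compact}. Your alternative justification of $z\in\upsilon X$ in (ii)$\Rightarrow$(iii) (take $K$ a singleton, so every $f\in C(X)$ extends over $X\cup\{z\}$) is a legitimate shortcut compared with the paper's appeal to $\kappa X\subseteq\upsilon X$. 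One small point in (ii)$\Rightarrow$(i): after concluding $\hat\psi(z)=\psi(y)$ for some $y\in\kappa X$, you still must argue $y=z$; the paper does this by diagonalizing with the embedding into $\beta X$, but a net argument using that $\psi$ is an embedding and $\beta X$ is Hausdorff also works, so this is recoverable from your sketch.

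The genuine deficiency is that you never actually close the decisive step from (v) back to (iii). You correctly identify the device — boundedness of $\delta_z$ on a compact $K\subseteq C_p(X)$ plus the truncations $Q_n$ — but then vacillate between ``$\delta_z\circ Q_n\to\delta_z$ uniformly on $K$'' and an appeal to strict $\omega$-continuity, and carry out neither. No limit argument is needed: the point is an \emph{exact identity}. Choose $n$ with $|\delta_z(f)|=|\bar f(z)|\le n$ for all $f\in K$. For $g\in K$, the function $Q_n\circ\bar g$ is a continuous extension of $Q_n\circ g$ to $\upsilon X$, hence by uniqueness of such extensions $\overline{Q_n\circ g}=Q_n\circ\bar g$, and therefore
\[
\delta_z(Q_n\circ g)=\overline{Q_n\circ g}(z)=\big(Q_n\circ\bar g\big)(z)=\bar g(z)\wedge n\vee(-n)=\bar g(z)=\delta_z(g),
\]
the penultimate equality because $|\bar g(z)|\le n$. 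Thus $\delta_z{\restriction}_K=\delta_z\circ Q_n{\restriction}_K$ identically, and since $Q_n$ is continuous with $Q_n(K)$ uniformly bounded and compact, (v) gives continuity of $\delta_z$ on $Q_n(K)$ and hence of $\delta_z{\restriction}_K$. Note also that your heuristic ``$|Q_n(f)-f|=0$ wherever $|f|\le n$'' points at the wrong place: what matters is not where $f$ is bounded by $n$ on $X$ (it need not be anywhere), but that $|\bar f(z)|\le n$ at the single point $z$, which is exactly what Proposition \ref{p:vX-bounded-compact} supplies. The strict $\omega$-continuity detour is a red herring here.
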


\begin{proof}
(i)$\Ra$(ii) follows from Corollary \ref{c:f-extension-kk}. 
\smallskip

(ii)$\Ra$(i) Since $\kappa X$ is a $\kappa$-space, Theorem \ref{t:k-Cp-spaces} implies that for some family $\{K_\alpha\}_{\alpha\in\AAA}$ of compact spaces, there is an embedding
\[
T:\kappa X\to P:=\prod_{\alpha\in \AAA}C_p(K_\alpha)
\]
with closed image. For simplicity of notations we set $Y:=X\cup\{z\}$.  For every $\alpha\in\AAA$, let
\[
i_\alpha:\pi_\alpha\circ T{\restriction}_X, \;\; \mbox{ where }\; \pi_\alpha:P\to C_p(K_\alpha) \;\mbox{ is the $\alpha$-th projection}.
\]
By (ii), $i_\alpha$ is extended to a continuous mapping $\hat i_\alpha:Y\to C_p(K_\alpha)$. Consider the continuous mapping $\hat i:Y\to P$ defined by
\[
\hat i={\bigtriangleup}_{\alpha\in \AAA} \,\hat i_\alpha: Y\to P, \quad \hat i(y):=\big( \hat i_\alpha(y)\big)_\alpha.
\]
Let $\kappa: \kappa X\to \beta X$ and $R:Y\to \beta X$ be embeddings. Then also $T{\bigtriangleup} \kappa:\kappa X\to P\times \beta X$,  $T{\bigtriangleup} \kappa(x):=\big(T(x),\kappa(x)\big)$, is an embedding. Moreover, since  $T$ is an embedding with closed image, Lemma \ref{l:closed-embedding} guarantees that the embedding  $T{\bigtriangleup} \kappa$ also has closed image.

Since  $T{\bigtriangleup} \kappa\big(\kappa X\big)$ is closed in $P\times \beta X$, $X$ is dense in $Y$, and the mappings $T{\bigtriangleup}\kappa$ and $\hat i{\bigtriangleup}R$ coincide on $X$, we obtain $\hat i{\bigtriangleup}R(Y) \subseteq \big(T{\bigtriangleup}\kappa\big)\big(\kappa X\big)$. Therefore there is some $y\in \kappa X$ such that $\hat i{\bigtriangleup}R(z) = \big(T{\bigtriangleup}\kappa\big)(y)$. In particular, $R(z)=\kappa(y)$. Since $R$ and $\kappa$ are bijective, it follows that $z=y\in \kappa X$.
\smallskip

(ii)$\Ra$(iii) Since $\upsilon X$ is a $\kappa$-space by (iii) of Proposition \ref{p:kk-space-permanent}, the inclusion $z\in \upsilon X$ follows from the definition of $\kappa X$. To show that $\delta_z$ is $k$-continuous on $C_p(X)$, let $K$ be a compact subset of $C_p(X)$. Define a mapping $\psi: X\to C_p(K)$ by $\psi(x)(f):=f(x)$ for $f\in K$. Then $\psi$ is continuous, and hence, by (ii), $\psi$ is extended to a continuous mapping $\hat\psi: X\cup\{z\} \to C_p(K)$. Observe that $\delta_z{\restriction}_K =\hat\psi(z)$. Indeed, if $\{x_i\}_{i\in I}\subseteq X$ is a net converging to $z$ and $f\in K$, then
\[
\delta_z(f)=\bar f(z)=\lim_i f(x_i) = \lim_i \psi(x_i)(f)=\hat\psi (z)(f)
\]
and hence $\delta_z{\restriction}_K =\hat\psi(z)$. Thus the mapping $\delta_z{\restriction}_K$ is continuous.
\smallskip

(iii)$\Ra$(ii) Let $K$ be a compact space, and let $\phi:X\to C_p(K)$ be a continuous mapping.  Consider the mapping $\psi:K\to C_p(X)$ defined by $\psi(s)(x):=\phi(x)(s)$, where $s\in K$ and $x\in X$. Then $\psi$ is continuous as the restriction mapping  onto $K$ of the adjoint map $C_p(C_p(K))\to C_p(X)$. Then $\psi(K)$ is a compact subset of $C_p(X)$, and hence, by (iii), the function $\delta_z{\restriction}_{\psi(K)}$ is continuous. Therefore the mapping $\hat\phi:  X\cup\{z\} \to C_p(K)$ defined by
\[
\hat\phi(x)(s):=\delta_x(\psi(s))\quad (x\in X\cup\{z\} \mbox{ and } s\in K)
\]
is well-defined because if $x\in X$ and $s\in K$, we have
\[
\hat\phi(x)(s)=\delta_x\big(\psi(s)\big)=\psi(s)(x)=\phi(x)(s).
\]
In particular, $\hat\phi$ extends $\phi$ and hence $\hat\phi{\restriction}_X$ is continuous. It remains to show that $\hat\phi$ is continuous at $z$. To this end, let $\{x_i\}_{i\in I}\subseteq X$ be a net converging to $z$. Then for every $s\in K$, we have (recall that $\overline{\psi(s)}$ denotes the unique extension of $\psi(s)$ onto $\upsilon X$)
\[
\hat\phi(x_i)(s)=\delta_{x_i}\big(\psi(s)\big)=\delta_{x_i}\big(\overline{\psi(s)}\big)=\overline{\psi(s)}(x_i)\to \overline{\psi(s)}(z)=\delta_{z}\big(\psi(s)\big)=\hat\phi(z)(s).
\]
Thus $\hat\phi$ is continuous also at $z$, and hence $\hat\phi$ is a desired extension of $\phi$.
\smallskip

The implications (iii)$\Ra$(iv)$\Ra$(v) are clear.
\smallskip

(v)$\Ra$(iii) Let $K$ be a compact subset of $C_p(X)$. It follows from Proposition \ref{p:vX-bounded-compact} that $\delta_z$ is bounded on $K$. Choose $n\in\w$ such that
\[
|\delta_z(f)|=|\bar f(z)|\leq n \quad \mbox{ for every } \; f\in K.
\]
Then for every $g\in K$, we obtain
\begin{equation} \label{equ:description-point-kk-1}
\delta_z(g)=\bar g(z)=(Q_n \circ \bar g)(z)=\overline{Q_n \circ g}(z)=\delta_z(Q_n \circ g).
\end{equation}
To prove that $\delta_z$ is continuous on $K$, we observe first that the continuity of $Q_n$ implies that  $Q_n(K)$ is a uniformly bounded compact subset of $C_p(X)$. By assumption $\delta_z$ is continuous on $Q_n(K)$. Since, by (\ref{equ:description-point-kk-1}), $\delta_z{\restriction}_K=\delta_z\circ Q_n{\restriction}_K$ it follows that $\delta_z{\restriction}_K$ is continuous.\qed
\end{proof}

\begin{proposition} \label{p:delta-z-bounded-compact}
Let $X$ be a Tychonoff space, and let $z\in\beta X$. If  $\delta_z$ is bounded on each convergent sequence in $C_p^b(X)$, then $z\in\upsilon X$.
\end{proposition}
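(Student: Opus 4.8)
The plan is to prove the contrapositive: assuming $z \in \beta X \setminus \upsilon X$, I would construct a convergent sequence in $C_p^b(X)$ on which $\delta_z$ is unbounded. Since $z \notin \upsilon X$, by the standard description of the Hewitt realcompactification there is a function $f \in C(X)$, which we may take positive, whose extension to $\upsilon X$ does not continuously extend over $z$; concretely, one finds a sequence of clopen-type neighbourhoods in $\beta X$ witnessing that $z$ is in the closure of a family of zero-sets of $X$ with empty intersection. The cleanest route is to use that $z \notin \upsilon X$ iff there is a decreasing sequence $\{Z_n\}_{n\in\w}$ of zero-sets of $X$ with $\bigcap_n \mathrm{cl}_{\beta X} Z_n \ni z$ but $\bigcap_n Z_n = \emptyset$. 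Equivalently, there is $g \in C(X)$ with $0 < g \le 1$ on $X$ such that $\bar g(z) = 0$, where $\bar g$ is the extension to $\beta X$ (this is exactly the obstruction to realcompactness localized at $z$).

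The key construction then goes as follows. Pick such a $g$ with $\bar g(z)=0$ and $0 < g \le 1$ on $X$. For each $n \in \w$ set
\[
f_n := \min\{\, n, \, 1/g \,\} - \min\{\, n, \, 1/g \,\}\big|_{\text{truncated}} \quad\text{— more simply, } f_n := \big(n \wedge \tfrac1g\big),
\]
so each $f_n$ is a bounded continuous function on $X$ with $0 \le f_n \le n$, and $f_n \nearrow 1/g$ pointwise on $X$. I would then rescale to force convergence to $0$ in $C_p(X)$: put $h_n := f_n / c_n$ for a suitable fast-growing sequence $c_n \to \infty$. Because $1/g$ is a genuine continuous (possibly unbounded) function on $X$, for each fixed $x$ the values $f_n(x)$ stabilize at $1/g(x)$, hence $h_n(x) = f_n(x)/c_n \to 0$; so $\{h_n\}$ converges to the zero function in $C_p(X)$, and in fact (after checking) is a convergent sequence in $C_p^b(X)$ since each $h_n$ is bounded. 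On the other hand, $\delta_z(h_n) = \overline{h_n}(z) = \overline{f_n}(z)/c_n$, and the point is that $\overline{f_n}(z)$ grows without bound: since $\bar g(z)=0$ and $g \le 1$, the extension of $n \wedge \tfrac1g$ to $\beta X$ takes the value $n$ at $z$ (one checks $\overline{f_n}(z) = n$ because $\{x : g(x) < 1/n\}$ has $z$ in its $\beta X$-closure, as $\bar g(z)=0$). Choosing $c_n = \sqrt n$, say, gives $\delta_z(h_n) = n/\sqrt n = \sqrt n \to \infty$ while $h_n \to 0$ in $C_p^b(X)$. Hence $\delta_z$ is unbounded on a convergent sequence in $C_p^b(X)$, contradicting the hypothesis; therefore $z \in \upsilon X$.

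The main obstacle — and the step needing genuine care — is the interplay between the truncation level $n$ and the scaling $c_n$: I must pick $c_n$ slowly enough that $\overline{f_n}(z)/c_n \to \infty$ yet fast enough (relative to how $f_n$ blows up near the "bad" part of $X$) that $h_n \to 0$ pointwise on all of $X$ and the sequence is actually convergent (not merely pointwise-null) in $C_p^b(X)$. Pointwise convergence is automatic from stabilization of $f_n(x)$, but to get a convergent sequence in $C_p(X)$ one just needs the limit to be continuous, which it is (the constant $0$); so convergence to $0$ in $C_p(X)$ holds, and since all $h_n$ and the limit lie in $C_p^b(X)$, it is a convergent sequence there. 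The only real content is verifying $\overline{f_n}(z) = n$, i.e. that $z$ lies in the $\beta X$-closure of $\{x \in X : g(x) \le 1/n\}$; this follows from $\bar g(z) = 0$ together with continuity of $\bar g$ on $\beta X$, since any $\beta X$-neighbourhood of $z$ meets $\{x : g(x) < 1/n\}$. With that identity in hand the estimate $\delta_z(h_n) = \overline{f_n}(z)/c_n$ is immediate and the contradiction is complete.
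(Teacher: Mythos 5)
Your proposal is correct and follows essentially the same route as the paper: both argue the contrapositive, take a continuous function ($g$, resp.\ $h$) that is strictly positive on $X$ with $\bar g(z)=0$ witnessing $z\notin\upsilon X$, and build from it a null sequence in $C_p^b(X)$ on which $\delta_z$ blows up. The only difference is in the construction of that sequence --- you truncate $1/g$ at level $n$ and rescale by $\sqrt n$, while the paper obtains pointwise convergence to $\mathbf{0}$ via auxiliary Urysohn functions $g_n$ adapted to the level sets of $h$ --- and your key verification that $\overline{f_n}(z)=n$, via $z\in\mathrm{cl}_{\beta X}\{x\in X: g(x)<1/n\}$, is sound.
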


\begin{proof}
Suppose for a contradiction that $z\not\in\upsilon X$. Then, by Theorem 3.11.10 of \cite{Eng}, there is a continuous function $h:\beta X\to [0,1]$ such that $h(z)=0$ and $h(x)>0$ for every $x\in\upsilon X$. Choose a null-sequence  $0<a_{n+1}< a_{n}\leq 1$ such that
the set $A_n:= \big\{ x\in\beta X: h(x)\in [a_{n+1},a_n]\big\} \not=\emptyset$ for every $n\in\w$. For every $n>0$, set $B_n:=  \big\{ x\in\beta X: a_n\leq h(x)\big\}$ and choose a continuous function $g_n:\beta X\to [a_n,\tfrac{1}{a_n}]$ such that
\[
g_n{\restriction}_{A_{n+1}\cup\{z\}} =a_n \;\; \mbox{ and }\;\; g_n{\restriction}_{B_{n}}=\tfrac{1}{a_n}.
\]
Finally, for every $n\in\w$, we define $f_n:= \tfrac{1}{g_n}{\restriction}_X\in C^b(X)$. The choice of $h$ implies that for every $x\in X$, there is $n_x\in\w$ such that $x\in B_n$ for every $n\geq n_x$. Therefore $f_n\to \mathbf{0}$ in $C_p^b(X)$. Whence  $S:=\{f_n\}_{n\in\w}\cup\{\mathbf{0}\}$ is a null-sequence in $C_p^b(X)$. However, $\delta_z(f_n)=\tfrac{1}{a_n} \to\infty$, and hence $\delta_z$ is unbounded on $S$. This contradiction shows that $z\in\upsilon X$.\qed
\end{proof}

Now we are ready to prove Theorem \ref{t:x-vX-kX}, Theorem \ref{t:x-vX-kX-bounded} and Theorem \ref{t:compact-Cp(KX)}.

\begin{proof}[Proof of Theorem \ref{t:x-vX-kX}]
(i) This is Proposition \ref{p:vX-bounded-compact}.

\smallskip

(ii) follows from Proposition  \ref{p:description-point-kk}.\qed
\end{proof}

\begin{proof}[Proof of Theorem \ref{t:x-vX-kX-bounded}]
(i) Let $z\in\beta X$. If $z\in\upsilon X$, then, by (i) of Theorem \ref{t:x-vX-kX},  $\delta_z$ is bounded on all compact subsets of $C_p(X)$ and hence also on all compact subsets of $C_p^b(X)$. Conversely, if $\delta_z$ is bounded on all compact subsets of $C_p^b(X)$, then, by Proposition \ref{p:delta-z-bounded-compact}, $z\in\upsilon X$.
\smallskip

(ii) Let $z\in\kappa X$. Then, by (ii) of  Theorem \ref{t:x-vX-kX},  $\delta_z$ is continuous on all compact subsets of $C_p(X)$ and hence also on all compact subsets of $C_p^b(X)$. Conversely, assume that $z\in\beta X$ is such that $\delta_z$ is continuous on all compact subsets of $C_p^b(X)$. Then, by Proposition \ref{p:delta-z-bounded-compact}, $z\in\upsilon X$.  Thus,  by Proposition \ref{p:description-point-kk},  $z\in \kappa X$.\qed
\end{proof}


\begin{proof}[Proof of Theorem \ref{t:compact-Cp(KX)}]
(i)$\Ra$(ii) is clear.
\smallskip

(ii)$\Ra$(iii) Assume that $C_p^b(Y)$ and $C_p^b(X)$ have the same compact subsets. To show that $Y\subseteq \kappa X$, fix an arbitrary $y\in Y$. Then $\delta_y$ is continuous on $C_p^b(Y)$. Therefore  $\delta_y$ is continuous on all compact subsets of $C_p^b(Y)$ and hence of $C_p^b(X)$. Hence, by Theorem \ref{t:x-vX-kX}, $y\in \kappa X$. Thus $Y\subseteq \kappa X$.
\smallskip

(iii)$\Ra$(i) Assume that $Y\subseteq \kappa X$. Since the topology of $C_p^b(Y)$ is defined by the family $\{\delta_y:y\in Y\}$, $C_p^b(Y)$ and $C_p^b(X)$ have the same compact subsets by Proposition \ref{p:description-point-kk}.

It is useful to give a totally different proof of the sufficiency. Since the restriction map $I:C_p^b(Y)\to C_p^b(X)$ is continuous, we always have the inclusion $\KK\big(C_p^b(Y)\big)\subseteq \KK\big(C_p^b(X)\big)$. To prove the inverse inclusion, let $K$ be a compact subset of $C_p^b(X)$. Since $Y\subseteq \kappa X$, Theorem \ref{t:x-vX-kX} implies that each $\delta_y$ is continuous and hence bounded on $K$. Therefore $K$ is a precompact subset of $C_p^b(Y)$. It remains to prove that $K$ is complete in $C_p^b(Y)$. To this end,  let $\{f_\alpha\}_{\alpha\in\AAA}$ be a Cauchy net in $K$ with respect to $C_p^b(Y)$. As $I$ is continuous,  $\{f_\alpha\}_{\alpha\in\AAA}$ is a Cauchy net in $K$ with respect to $C_p^b(X)$. Since $K$ is compact in $C_p^b(X)$, there is $f\in K$ such that $f_\alpha \to f$ in $C_p^b(X)$. Since for every $y\in Y$, $\delta_y$ is continuous on $K$, we have $f_\alpha(y)\to f(y)$. But this means that $f_\alpha \to f\in K$ also in $C_p^b(Y)$. Thus $K$ is complete and hence compact in $C_p^b(Y)$.\qed
\end{proof}

The next example shows that the class of $\kappa$-spaces strictly contains the class of Dieudonn\'{e} complete spaces.

\begin{example} \label{exa:kappa-non-Dieudonne}
There are Fr\'{e}chet--Urysohn  $\kappa$-spaces which are not Dieudonn\'{e} complete.
\end{example}

\begin{proof}
Let $Y=C_p(\w_1+1)$.
From Example \ref{exa:Cp-kappa-non-realcompact} it follows that $Y$ is a Fr\'{e}chet--Urysohn $\kappa$-space, which is not a realcompact space. It remains to note (see Exercise 458 of \cite{Tkachuk-1}) that $C_p$-spaces are Dieudonn\'{e} complete if, and only if, they are realcompact. \qed
\end{proof}

The next example shows that the class of $\kappa$-spaces is strictly contained in the class of all $\mu$-spaces, it also shows that  Theorem \ref{t:compact-Cp(KX)} strongly generalizes Proposition 2.9 of Haydon \cite{Haydon-2}.

\begin{example} \label{exa:mu-non-kappa}
There are $\mu$-spaces which are not $\kappa$-spaces.
\end{example}

\begin{proof}
Let $X$ be the space described in Example \ref{exa:Cp-mu-non-kappa}, and let $Y=C_p(X)$. It follows from Example \ref{exa:Cp-mu-non-kappa} that $Y$ is a $\mu$-space, which is not a $\kappa$-space. \qed
\end{proof}

\bibliographystyle{amsplain}

\end{document}